\documentclass[onefignum,onetabnum]{siamonline250211}

\usepackage{amssymb}
\usepackage{mathrsfs}
\usepackage{stmaryrd}

\SetSymbolFont{stmry}{bold}{U}{stmry}{m}{n}
  
\usepackage{mathtools}
\usepackage{bm}
\usepackage{accents}
\usepackage{relsize}

\usepackage{multirow}
\usepackage{booktabs}

\usepackage{algpseudocode}

\usepackage{enumerate}
\usepackage[normalem]{ulem}

\definecolor{ForestGreen}{RGB}{34,139,34}

\allowdisplaybreaks

\usepackage{mathtools}

\usepackage[normalem]{ulem} 

\newcommand{\va}{{\mathbf{a}}}

\newcommand{\ve}{{\mathbf{e}}}

\newcommand{\vj}{{\mathbf{j}}}

\newcommand{\vm}{{\mathbf{m}}}
\newcommand{\vn}{{\mathbf{n}}}

\newcommand{\vv}{{\mathbf{v}}}

\newcommand{\vx}{{\mathbf{x}}}

\newcommand{\vJ}{{\mathbf{J}}}

\newcommand{\vM}{{\mathbf{M}}}

\newcommand{\vP}{{\mathbf{P}}}

\newcommand{\vU}{{\mathbf{U}}}

\newcommand{\cL}{{\mathcal{L}}}

\newcommand{\cP}{{\mathcal{P}}}

\newcommand{\cX}{{\mathcal{X}}}

\newcommand{\RR}{\mathbb{R}} 
\newcommand{\vzero}{\mathbf{0}} 

\newcommand{\st}{\mbox{ s.t. }}

\DeclareMathOperator*{\argmin}{arg\,min} 

\newcommand{\bc}{\begin{center}}
\newcommand{\ec}{\end{center}}

\newcommand{\bdm}{\begin{displaymath}}
\newcommand{\edm}{\end{displaymath}}

\newcommand{\beq}{\begin{equation}}
\newcommand{\eeq}{\end{equation}}

\newcommand{\bfl}{\begin{flushleft}}
\newcommand{\efl}{\end{flushleft}}

\newcommand{\bt}{\begin{tabbing}}
\newcommand{\et}{\end{tabbing}}

\newcommand{\beqn}{\begin{eqnarray}}
\newcommand{\eeqn}{\end{eqnarray}}

\newcommand{\beqs}{\begin{align*}} 
\newcommand{\eeqs}{\end{align*}}  

\newsiamremark{remark}{Remark}

\ifpdf 
\hypersetup{
pdftitle={Scalable Dynamic Optimal Transport via Distributed Linearized ADMM},
pdfauthor={Hari Dahal, Rongjie Lai, and Yangyang Xu}
}

\title{Scalable Dynamic Optimal Transport via Distributed Linearized ADMM}
\author{Hari Dahal\thanks{Department of Mathematical Sciences, Rensselaer Polytechnic Institute, Troy, NY 12180, (\email{haridahal156@gmail.com}, \email{xuy21@rpi.edu}). }
\and Rongjie Lai\thanks{Department of Mathematics, Purdue University, West Lafayette, IN 47907, (\email{lairj@purdue.edu}).}
\and Yangyang Xu\footnotemark[1]}

\date{\today}

\begin{document}

\maketitle

\begin{abstract}
In this paper, we address two fundamental challenges in the numerical solution of dynamic optimal transport (OT) problems. The first challenge arises when the initial and/or terminal densities approach zero and no positive lower bound is available. In this regime, conventional methods may become unstable or computationally inefficient, since the Lipschitz constant of the objective can scale like the reciprocal of the cube of the density. As a result, near-zero regions may lead to slow convergence or even divergence. The second challenge concerns the substantial memory cost of the dynamic formulation, whose discretization over fine spatial and temporal grids requires storing variables across the entire space-time domain. This storage burden quickly becomes prohibitive as the grid is refined or the spatial dimension increases.
To overcome the first difficulty, we reformulate the classical discretized dynamic OT problem so that the resulting objective admits an exact proximal mapping. When combined with a linearized alternating direction method of multipliers (LADMM), this reformulation yields an efficient and robust algorithm that remains stable even in challenging settings where the density may vanish. To reduce the memory burden, we further introduce a distributed formulation in which the optimization variables are partitioned across multiple agents. This design substantially lowers the storage requirement for each agent and can also lead to computational acceleration. We validate the proposed framework through numerical experiments in one- and two-dimensional spatial settings under varying levels of difficulty. The results demonstrate the stability, robustness, and scalability of the proposed method in comparison with conventional approaches.
\end{abstract}

\begin{keywords}
    Dynamic optimal transport, linearized ADMM, 
    image mapping,  distributed computation. 
\end{keywords}

\begin{MSCcodes}
    90C25, 49M27, 65Y05, 90C06
\end{MSCcodes}

\section{Introduction}


Optimal transport (OT) provides a principled framework for comparing and transforming probability distributions  and has become an important tool in applications across a wide range of fields, including economics \cite{buttazzo2002optimal,carlier2005variational,chiappori2010hedonic,galichon2018optimal}, image processing \cite{chizat2017unbalanced,chizat2018interpolating,papadakis2014optimal}, machine learning and generative AI \cite{altschuler2017near,kolouri2017optimal,rubner1998metric,arjovsky2017wasserstein,salimans2018improving}, probability theory \cite{rachev2006mass}, and partial differential equations \cite{carrillo2010numerical,jordan1998variational,cances2017numerical}. Comprehensive treatments of OT and its applications can be found in \cite{villani2021topics,villani2009optimal,santambrogio2015optimal,peyre2019computational}.

The foundations of OT can be traced back to the Monge mapping problem, introduced by Gaspard Monge in 1781. Monge's original formulation arose from a practical problem: to find the optimal way to transport a pile of material from one location to another 
by minimizing the cost of transportation. In a more modern framework, Monge's problem was generalized to probability measures by Leonid Kantorovich in 1942 \cite{kantorovich1942transfer}, leading to what is now known as the Monge-Kantorovich (MK) problem. In this paper, we focus on the Benamou–Brenier formulation of OT given below \cite{benamou1999numerical,benamou2000computational, maas2015generalized, lombardi2015eulerian, piccoli2014generalized}:
\begin{align}\label{problem: main}
    \min_{(\rho, \vm) \in \mathcal{S}(\bm{\rho}_0, \bm{\rho}_1) } \int_0^1 \int_{\Omega} F(\rho(t,\vx), \vm(t, \vx)) \text{d}\vx \text{d}t. 
\end{align} 
 Here, $\Omega\subset\mathbb{R}^D$, \(\rho(t,\vx)\) denotes the density field, and \(\vm(t,\vx):=\rho(t,\vx)\vv(t,\vx)\) is the flux field associated with the velocity field \(\vv(t,\vx)\).  
The dynamic cost function $F: \mathbb{R}^+ \times \mathbb{R}^D \rightarrow \overline{\mathbb{R}} := \mathbb{R} \cup \{\infty\}$ is defined as \cite{benamou1999numerical} 
    \begin{align}\label{eq: maincostfunction}
    F(b, \va) = \begin{cases}
        \displaystyle\frac{\|\va \|^2}{2b} & \text{ if } b > 0, \\
        0 & \text{ if } b = 0, \va = \mathbf{0},\\
         +\infty & \text{ if } b = 0, \va \neq \mathbf{0}.
    \end{cases}
    \end{align}
    The admissible set $\mathcal{S}(\bm{\rho}_0,\bm{\rho}_1)$ consists of all density–momentum pairs satisfying the continuity equation, the no-flux boundary condition, and the prescribed initial and terminal densities:
\begin{align}\label{cons: main}
    \mathcal{S}(\bm{\rho}_0, \bm{\rho}_1) :=  \big\{(\rho, \vm)\,|\,  & \partial_t \rho + \text{div}_x \vm = 0, \\
    \quad \quad \quad & \vm \cdot \vn = 0 \text{ for } \vx \in \partial \Omega, \rho(0, \cdot) = \bm{\rho}_0, \rho(1, \cdot) = \bm{\rho}_1\big\}, \notag
\end{align}
where \(\vn\) denotes the outward unit normal vector on $\partial\Omega$.
This dynamic formulation has been widely used in scientific computing, image processing, fluid mechanics, machine learning, and related areas, due to its close connection with PDE-constrained optimization and its ability to describe the full transport path between two distributions.

Several prominent approaches for solving Problem~\eqref{problem: main} include the augmented Lagrangian method (ALM)~\cite{benamou2000computational}, the fast iterative soft-thresholding algorithm (FISTA)~\cite{yu2024fast}, and G-prox\\~\cite{jacobs2019solving}. In the regime where the initial and/or terminal densities approach zero, FISTA may exhibit very slow convergence due to the large local Lipschitz constant of the gradient of the dynamic OT energy. The primal subproblems arising in G-prox for discretizations of Problem~\eqref{problem: main} generally do not admit closed-form solutions. Therefore, they must either be solved approximately or handled by an additional iterative solver, which can increase the overall computational cost. Moreover, both G-prox and ALM are infeasible methods in the sense that their iterates do not necessarily satisfy the mass-conservation constraint at each iteration. As a result, they may require many iterations and thus are inefficient to reduce feasibility violations to a low tolerance. 

Besides the lack of efficient and robust algorithms for handling the above mentioned challenging case, 
another difficulty arises from the 
fine-grid discretization of the density and flux fields, which requires storing very high-dimension variables over the entire space–time domain. More precisely, as the spatial grid is refined, the number of time steps increases, or the spatial dimension grows, this storage requirement quickly becomes prohibitive.

These limitations/challenges motivate the development of an efficient numerical method for dynamic OT that should be not only stable with respect to near-zero densities but also scalable in memory and computation. 
To achieve this goal, we develop an exact-proximal linearized alternating direction method of multipliers (LADMM) for the discretized dynamic OT problem. The key idea is to reformulate the classical discretized formulation by introducing auxiliary variables, so that the subproblem involving the dynamic OT energy admits an exact proximal mapping. This exact-proximal update avoids the instability associated with gradient-based updates 
near zero densities. Meanwhile, the algorithm preserves the mass-conservation constraint throughout the iterations, by performing explicit projection steps. 
To further reduce the memory burden, we introduce a distributed formulation of the proposed method. In this formulation, the optimization variables are partitioned across multiple agents, so that each agent only stores and updates a portion of the full space--time variables. This substantially reduces the per-agent storage requirement and naturally enables parallel computation.

The main contributions of this paper are summarized as follows:

\begin{enumerate}

    \item We propose an exact-proximal based LADMM for the discretized dynamic OT problem. By reformulating the problem so that the proximal mapping of the dynamic OT energy can be computed exactly, the proposed method avoids gradient-based updates that become unstable when the density approaches zero. This addresses a key limitation of projected gradient and existing primal--dual methods whose convergence may deteriorate due to the large local gradient Lipschitz constant of the dynamic OT objective.

    \item To handle very large-scale OT problems, we further develop a distributed implementation of the proposed LADMM method by partitioning the space--time variables across multiple agents. This reduces the memory requirement for each agent and enables parallel computation.

    \item We validate the proposed framework through extensive numerical experiments in one- and two-dimensional spatial settings under varying levels of difficulty and/or a distributed setting. The results demonstrate that the proposed  LADMM exhibits significantly higher 
    numerical robustness and efficiency than existing methods in challenging regimes involving near-zero densities.  For the case with distributed computing, our method is able to achieve high speed up, with over 6x speed up by using 12 CPU cores. 

\end{enumerate}


\subsection{Related work}

Many numerical approaches have been developed for solving the dynamic OT problem \eqref{problem: main}. The augmented Lagrangian method of Benamou and Brenier~\cite{benamou2000computational} pioneered the numerical solution of the dynamic formulation and has inspired many subsequent variants~\cite{lombardi2015eulerian,papadakis2014optimal}. ALM can
often converge, but it can struggle with reducing the feasibility. 

Projected gradient-type methods form another important class of algorithms. Yu et al.~\cite{yu2024fast} proposed a fast iterative soft-thresholding algorithm (FISTA) for mean-field planning problems, of which \eqref{problem: main} is a special case. Such methods can be efficient when the objective has a moderate gradient Lipschitz constant, but they become less effective in the presence of near-zero densities because of the large local curvature of the kinetic energy. In particular, since the local gradient Lipschitz constant can scale on the order of \(1/\rho^3\), projected gradient methods may require very small step sizes when \(\rho\) approaches zero, leading to degraded convergence.

Primal--dual methods provide a third class of approaches. Jacobs et al.~\cite{jacobs2019solving} proposed the G-prox method for large-scale optimal transport problems. Related primal--dual formulations for dynamic OT have also been studied by Henry et al.~\cite{henry2019primal}, where the Helmholtz--Hodge decomposition~\cite{girault2012finite} is used to enforce divergence-free constraints during the iteration. These methods
require a substantial number of iterations to reduce constraint violations, as
it is not based on an exact projection step.

In addition to the above optimization-based approaches, several learning-based methods have recently been proposed for dynamic OT and related flow problems~\cite{ruthotto2020machine, huang2023bridging, fan2023neural, wan2023scalable, huang2025unsupervised}. There has also been growing interest in dynamic OT on discrete surfaces and geometric domains~\cite{lavenant2018dynamical, dong2025gradient, yu2023computational}. These directions are complementary to the present work, which focuses on robust and scalable optimization methods for the Eulerian dynamic formulation \eqref{problem: main}.

Distributed and parallel methods for OT have been studied in several related settings. For entropic OT, Bonafini and Schmitzer~\cite{bonafini2021domain} developed a domain decomposition algorithm based on the earlier domain decomposition framework of Benamou~\cite{benamou1995domain}. Wang et al.~\cite{wang2023decentralized} proposed a decentralized entropic OT method for privacy-preserving distributed distribution comparison, using a mini-batch randomized block-coordinate descent algorithm to minimize a decentralized dual formulation. Li et al.~\cite{li2018parallel} developed a parallel primal--dual algorithm for computing the Earth Mover's Distance by reformulating the problem as an \(L_1\)-type minimization problem with an additional quadratic regularization term. However, these methods primarily address the static Monge--Kantorovich problem, entropic OT, or related variants motivated by specific applications. They do not directly solve the dynamic OT formulation \eqref{problem: main} considered in this work.

\subsection{Notations}\label{sec:notation}
We define $\mathbb{R}_+ := \{a \in \mathbb{R} \mid a \geq 0 \}$ and  
$[N]:=\{1,2,\ldots,N\}$. 
$\partial \Omega$ denotes the boundary of the set $\Omega$. {$\vM_\vj$ denotes the entry 
of $\vM$ specified by the index vector $\vj$ and $\ve_d$ denotes a vector with $1$ in the $d$-th entry and $0$ everywhere else. For a linear operator $A$, its norm 
is defined as $\|A\|_* := \sup_{\|\vx\|=1} \|A(\vx)\|$, where $\|\cdot\|$ denotes the Euclidean norm. 
The $d$-mode 
product \cite{kolda2009tensor} of a tensor $\cX \in \RR^{n_0 \times n_1 \times \ldots \times n_D}$ with a matrix $\vU \in \RR^{\hat{n} \times n_d}$ is denoted by $\cX \times_{d} \vU$ with entries given by 
\begin{align*}
    (\cX \times_d \vU)_{i_0\cdots i_{d-1}ji_{d+1}\cdots i_D} = \sum_{i_d=0}^{n_d} x_{i_0i_1\cdots i_D}u_{ji_d}.
\end{align*}
We use $\langle \cdot, \cdot\rangle$ for standard Euclidean inner product. For any positive number $\Delta$, we denote 
\(
\langle x, y \rangle_{\Delta} = \Delta \langle x, y\rangle.
\) Also, we define $\|x\|_\Delta^2 = \langle x, x \rangle_{\Delta}$.
}

\section{Proposed Algorithm}

In this section, we give the LADMM for Problem~\eqref{problem: main} with detailed derivations for its updates. 
We begin by presenting a discretized formulation of Problem~\eqref{problem: main}. 
Then, we {derive the proximal mapping of $F$} defined in \eqref{eq: maincostfunction}, which plays a key role in enabling more robust and stable performance of  
our algorithm. 
Finally, we present the complete LADMM algorithm.

\subsection{Discretization}\label{sec: discretize}
We follow the staggered grid used by Yu et al. \cite{yu2024fast} to build a discretized version of Problem~\eqref{problem: main}. Suppose that $\Omega = [0,1]^D$. We consider a uniform grid with $n_0$ segments on the time interval $[0,1]$ and $n_d$ segments on the $d$-th space dimension for $d=1,\ldots,D$. The mesh size on each dimension is $\Delta_d = \frac{1}{n_d}$ for $ d = 0, 1, \ldots, D$. In addition, for each $ d = 0, 1, \ldots, D$, we denote 
\begin{align*}
    J_d &:= \left( \frac{3}{2}, \frac{5}{2},\ldots, n_d - \frac{1}{2} \right),\\
    \overline{J}_d &:= (1, 2,\ldots, n_d ),\\
    \vJ_d &:= \overline{J}_0 \times \overline{J}_1 \times \cdots \times \overline{J}_{d-1}
    \times J_d \times \overline{J}_{d+1} \times \cdots \times \overline{J}_D,\\
    \overline{\vJ} &:= \overline{J}_0 \times \overline{J}_1 \times \cdots \times \overline{J}_D,\\
    \widehat{J}_d &:= (1,2,\ldots,n_d-1),\\
    \widehat{\vJ}_d &:= \overline{J}_0 \times \overline{J}_1 \times \cdots \times \overline{J}_{d-1}
    \times \widehat{J}_d \times \overline{J}_{d+1} \times \cdots \times \overline{J}_D, \\
    \mathbb{V}_d &:= \mathbb{R}^{n_0 \times n_1 \times\ldots n_{d-1} \times (n_d-1) \times n_{d+1} \times\ldots \times n_D}, \\
    \overline{\mathbb{V}} &:= \mathbb{R}^{n_0 \times n_1 \times\ldots \times n_D}.
\end{align*}
Also, we denote $M_0$ and $M$ respectively as the discretization of $\rho$ and $\vm$ on grid points. Let $\widehat{\vx} = (t, \vx)$. Then we define 
\begin{equation}
\begin{aligned}\label{eq:discrePM}
    M_0 &\in \mathbb{V}_0 
    \text{ with } (M_0){_\vj} = \rho(\widehat{\vx}_{\vj+\frac{1}{2}\ve_d}), \, \forall\, \vj \in \hat{\vJ}_0, \\
    M_d &\in \mathbb{V}_d 
    \text{ with } (M_d)_\vj = \vm(\widehat{\vx}_{\vj + \frac{1}{2}\ve_d}), \, \forall\, \vj \in \hat{\vJ}_d, \  \forall\, d = 1, 2, \ldots, D, \\
    \vM &:= 
    \left(M_1,\ldots, M_D \right) \in \mathbb{V}_1 \times \mathbb{V}_2 \times\ldots \mathbb{V}_D.
\end{aligned}
\end{equation}

\begin{figure}[hpbt]
\begin{center}
\includegraphics[width=0.95\textwidth]{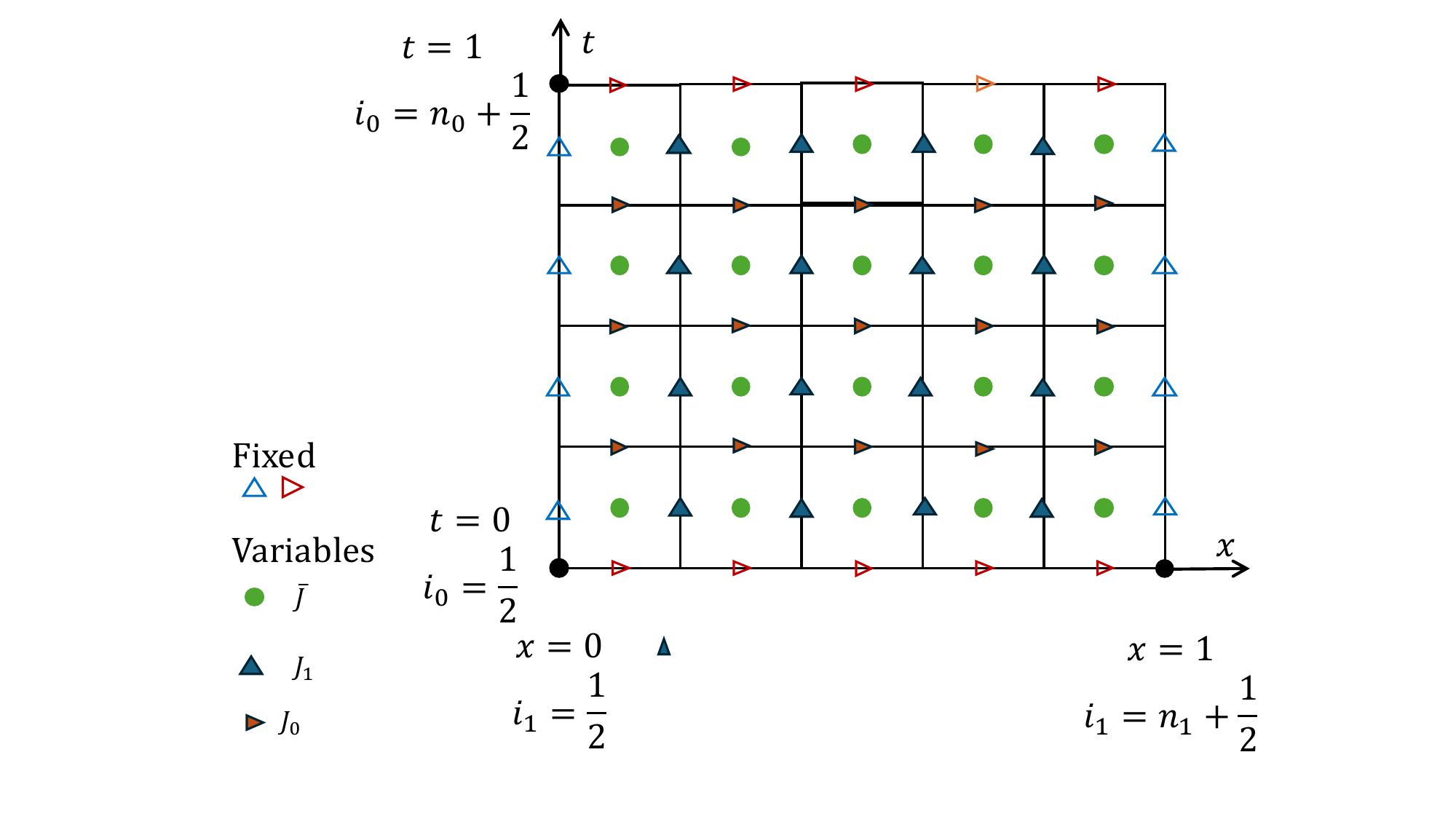}
\end{center}
\vspace{-4mm}
\caption{
Illustration of the decomposition of the time and spatial domain for $D=1$.}
\label{fig: discretization}
\end{figure}

For better understanding {of} our notations, we use the case of $D=1$ to explain them.
The index sets $\vJ_0$ and $\vJ_1$ correspond to the filled red and blue triangles in Figure~\ref{fig: discretization}, respectively. The function $\rho$ is defined on $\vJ_0$, while $\vm$ is defined on $\vJ_1$. Since $M_0$ and $M$ represent 
$\rho$ and $\vm$ at discretized points, 
we observe that 
they are defined on two staggered grids; that is, the red and 
blue triangles represent distinct spatial locations. When computing the objective in Problem~\eqref{problem: main}, 
we will evaluate the numerator and denominator at the same spatial locations. For this purpose, the averaged variables $\widetilde{M}_0$ and $\widetilde{\vM}$ that we introduce in the next subsection 
are defined on the central grid represented by the index set $\overline{\vJ}$, corresponding to the green points in Figure~\ref{fig: discretization}.

\paragraph{Objective value:}\label{subsec:objdisc}

To 
discretize the objective, we define average operators as: 
\begin{equation}
\begin{aligned}\label{eq:Avgop}
    &\text{for each } d = 0, 1,\ldots, D,\quad \text{let }\mathrm{Avg}_d : \mathbb{V}_d \longrightarrow \overline{\mathbb{V}}, M_d \longrightarrow \mathrm{Avg}_d(M_d)   \\
    &\text{such that for all } \vj = (j_0, j_1, \ldots, j_D) \in \overline{\vJ},\\
    &\hspace{10mm}(\mathrm{Avg}_d(M_d))_\vj := \begin{cases}
        \frac{1}{2}(M_d)_{\vj}, & \text{ for }j_d = 1,\\
        \frac{1}{2}\left( (M_d)_{\vj - \ve_d } + (M_d)_{\vj }\right), & \text{ for }j_d = 2,3, \ldots, n_d-1,  \\
        \frac{1}{2}(M_d)_{\vj-\ve_d}, & \text{ for }j_d = n_d,
    \end{cases} \\
    &\text{and let }\mathrm{Avg}: \mathbb{V}_1 \times\ldots \times \mathbb{V}_D \rightarrow \overline{\mathbb{V}}^D, \vM \rightarrow (\mathrm{Avg}_1(M_1), \ldots, \mathrm{Avg}_D(M_D)),
\end{aligned}
\end{equation}
{where the averaged values at $j_d = 1$ and $j_d = n_d$ are 
$\frac{1}{2}(M_d)_{\vj}$ and $\frac{1}{2}(M_d)_{\vj-\ve_d}$, respectively 
{because of} the boundary condition $\vm \cdot \vn = 0$ in Problem \eqref{problem: main}.}  The adjoint operators of the average operators defined in~\eqref{eq:Avgop} are given by
\begin{equation}
\begin{aligned}
&\text{for each } d = 0, 1,\ldots, D,\quad \mathrm{Avg}^*_d : \overline{\mathbb{V}} \longrightarrow \mathbb{V}_d, \widehat{M}_d \longrightarrow \mathrm{Avg}^*_d(\widehat{M}_d),\\
&\text{such that for all } \vj = (j_0, j_1, \ldots, j_D) \in \widehat{\vJ}\\
&\hspace{1cm}\mathrm{Avg}^*_d(\widehat{M}_d)_\vj := \frac{1}{2}\left( (\widehat{M}_d)_{\vj} + (\widehat{M}_d)_{\vj+\ve_d} \right)\\
&\text{and }\mathrm{Avg}^*: \overline{\mathbb{V}}^D  \rightarrow \mathbb{V}_1 \times\ldots \times \mathbb{V}_D, \widehat{\vM} \rightarrow (\mathrm{Avg}^*_1(\widehat{M}_1), \ldots, \mathrm{Avg}^*_D(\widehat{M}_D)).
\end{aligned}
\end{equation}

Also, we let $b_{\mathrm{avg}} \in \overline{\mathbb{V}}$ contain density boundary values: for any $\vj = (j_0, j_1, \ldots, j_D) \in \overline{\vJ}$,
\begin{align}\label{eq:bavgdisc}
    (b_{\mathrm{avg}})_\vj:= \begin{cases}
        \frac{1}{2}\bm{\rho}_0(\vx_{\hat{\vj}}), & \text{ for }j_0 = 1,\\
        0, & \text{ for }j_0 = 2, 3,\ldots, n_0-1\\
        \frac{1}{2}\bm{\rho}_1(\vx_{\hat{\vj}}), & \text{ for } j_0 = n_0,
    \end{cases}
\end{align}
where {$\hat{\vj} = (j_1, \ldots, j_D)
$ takes the last $D$ entries of $\vj$.}
Using the above operators, 
we define $\widetilde{M_0}, \widetilde{\vM} \in \overline{\mathbb{V}}$ as 
\begin{align}
    \widetilde{M_0} := \mathrm{Avg}_0(M_0) + b_{\mathrm{avg}}, \quad  \widetilde{\vM} := \mathrm{Avg}(\vM),
\end{align}
and approximate the objective value of \eqref{problem: main} by
\begin{align}\label{eq:discobj}
    f(\widetilde{M_0}, \widetilde{\vM}) := \left( \prod_{d=0}^D \Delta_d \right) \mathlarger{\mathlarger{\sum_{\vj \in \overline{\vJ}}}} \frac{\|\widetilde{\vM}_\vj \|^2}{2(\widetilde{M_0})_\vj}.
\end{align}


\paragraph{Constraints:}
 We use finite difference method to discretize the PDE constraints in \eqref{problem: main}. We first define  the following operators for discretizing partial derivatives:
 \begin{equation}
\begin{aligned}\label{eq:parOp}
    &{\text{for each } d = 0, 1,\ldots, D,}\quad \text{let }\cP_d: \mathbb{V}_d \rightarrow \overline{\mathbb{V}}, M_d \rightarrow \cP_d(M_d), 
    \\
    &{\text{such that for all } \vj = (j_0, j_1, \ldots, j_D) \in \overline{\vJ}},\\
    &\hspace{1cm}(\cP_d(M_d))_{\vj} := \begin{cases}
        \frac{1}{\Delta_d}(M_d)_{\vj}, & \text{ for }j_d = 1\\
        \frac{1}{\Delta_d} \left( (M_d)_{\vj} - (M_d)_{\vj-\ve_d} \right), & \text{ for }j_d =  2, 3,\ldots n_d - 1 \\
        -\frac{1}{\Delta_d}(M_d)_{\vj-\ve_d}, & \text{ for }j_d = n_d,
    \end{cases},\\
    &\text{and let }\cP: \mathbb{V}_1 \times\ldots \times \mathbb{V}_D \rightarrow \overline{\mathbb{V}}^D, \vM \rightarrow \big(\cP_1(M_1), \ldots, \cP_D(M_D)\big).
\end{aligned}
\end{equation}
{Just like in the definition of the average operators in \eqref{eq:Avgop}, the boundary values of $\vM$ are already included in \eqref{eq:parOp}.} {The adjoint operators of the partial derivative operators in \eqref{eq:parOp} are given by 
\begin{equation}
\begin{aligned}\label{eq:parOpAdj}
    &\text{for each } d = 0, 1,\ldots, D,\quad
    \cP_d^*: \overline{\mathbb{V}} \rightarrow \mathbb{V}_d,\;
    \widehat{M}_d \rightarrow \cP_d^*(\widehat{M}_d),\\
    &\text{such that for all } \vj = (j_0, j_1, \ldots, j_D) \in \widehat{\vJ}_d,\\
    &\hspace{1cm}(\cP_d^*(\widehat{M}_d))_{\vj}
    :=
    \frac{1}{\Delta_d}\left( (\widehat{M}_d)_{\vj} - (\widehat{M}_d)_{\vj+\ve_d}\right),\\
    &\text{and }\cP^*: \overline{\mathbb{V}}^D \rightarrow \mathbb{V}_1 \times\ldots \times \mathbb{V}_D,\;
    \vP \rightarrow \big(\cP_1^*(P_1), \ldots, \cP_D^*(P_D)\big).
\end{aligned}
\end{equation}
}

Second, we let 
$b \in \overline{\mathbb{V}}$ contain the $M_0$ boundary conditions: for all $\vj = (j_0, j_1, \ldots, j_D) \in \overline{\vJ}$,
\begin{align}\label{eq:bdisc}
    b_\vj := \begin{cases}
        -\frac{1}{\Delta_0}\bm{\rho}_0(\vx_{\hat{\vj}}), & \text{ for }j_0 = 1,\\
        0, & \text{ for }j_0 = 2, 3,\ldots, n_0-1\\
        \frac{1}{\Delta_0}\bm{\rho}_1(\vx_{\hat{\vj}}), & \text{ for }j_0 = n_0,
    \end{cases}
\end{align}
{where 
{$\hat{\vj} = (j_1, \ldots, j_D)
$ takes the last $D$ entries of $\vj$.}} 
With these notations, the PDE constraint set in Problem \eqref{problem: main} can be approximated by 
\begin{align}\label{def:const_set}
    \mathcal{C} := \left\{(M_0, \vM)\,\Bigg|\,  \sum_{i=0}^D \cP_dM_d + b = \mathbf{0} \right\}.
\end{align}

Now we obtain the discretized version of Problem~\eqref{problem: main} as follows:
{
\begin{equation}
    \begin{aligned}\label{prob:pre_admm}
        \min_{(M_0, \vM) \in \mathcal{C}} f\left(\mathrm{Avg}_0(M_0) + b_{\mathrm{avg}}, \mathrm{Avg}(\vM)\right),
    \end{aligned}
\end{equation}
where $M_0$ and $\vM$, defined in \eqref{eq:discrePM}, are the discretized counterparts of $\rho$ and $\vm$, respectively. The function $f$ is defined in \eqref{eq:discobj}, the averaging operators $\mathrm{Avg}_0$ and $\mathrm{Avg}$ are given in \eqref{eq:Avgop}, $b_{\mathrm{avg}}$ is defined in \eqref{eq:bavgdisc}, and the set $\mathcal{C}$ is defined in \eqref{def:const_set}.
}


\subsection{Proximal Mapping \texorpdfstring{{of $F$}}{of F}}
\label{subsec: proxmap}
    Note that the cost function $F$ defined in \eqref{eq: maincostfunction} is differentiable. However, its gradient is not (globally) Lipschitz continuous. Even locally, its gradient Lipschitz constant can be extremely large, in particular when $b$ is close to $0$. 
When $b>0$, we have the Hessian matrix of $F$ as 
        $\nabla^2 {F(b, \va)} 
        = \begin{bmatrix}
            \frac{\|\va\|^2}{b^3}  & \frac{\va^\top}{b^2} \\[2mm]
             -\frac{\va}{b^2} & \frac{1}{b}I
        \end{bmatrix}.$ 
    Thus 
      $\|\nabla^2 F(b, \va)\|  = \Theta\left( \frac{\|\va\|^2}{b^3} \right)$.
    Since the Lipschitz constant of {$\nabla F$} for $b>0$ is roughly the norm of its Hessian, 
    it can become arbitrarily large as $b$ approaches zero. Consequently, if either $\bm{\rho}_0$ or $\bm{\rho}_1$ in \eqref{problem: main} takes values close to zero, the (local) gradient Lipschitz constant becomes large, leading to tiny step sizes in the update of gradient-type methods for guaranteed convergence. 

 To address the issue caused by the exploding gradient Lipschitz constant, we do not perform gradient update by using $\nabla F$ but instead use its 
proximal mapping which is provided by the lemma below. We defer its proof in the appendix. 

\begin{lemma}\label{lemma:proxmain}
    {The proximal mapping of function $F$ \eqref{eq: maincostfunction} with parameter $\gamma > 0$ is 
    \begin{equation}
    \begin{aligned}
        \mathrm{Prox}_{F, \gamma}(\hat{\rho}, \hat{\vm}) &:= \argmin_{\rho, \vm} F(\rho, \vm) + \frac{1}{2\gamma} \left( (\rho - \hat{\rho})^2 + \|\vm-\hat{\vm}\|^2 \right), \\
        &=
        \begin{cases}
(0,\mathbf{0}),
& \text{if } \widehat{\rho} \le 0,\; \| \widehat{\vm}\|^2 \le -2\gamma \widehat{\rho},
\\[6pt]
\left(\overline{\rho}, \dfrac{\widehat{\vm}\,\overline{\rho}}{\overline{\rho}+\gamma} \right),
& \text{otherwise}.
\end{cases}
\end{aligned}
\end{equation}
    }
Here 
$\overline{\rho}$ is the unique real-number solution of the cubic equation 
\[
\overline{\rho}^3
+ (2\gamma - \widehat{\rho})\overline{\rho}^2
+ (\gamma^2 - 2\gamma \widehat{\rho})\overline{\rho}
- \frac{1}{2}\gamma \|\widehat{\vm}\|^2
- \gamma^2 \widehat{\rho}
= 0.
\]
\end{lemma}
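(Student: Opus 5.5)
The plan is to use that $F$ is a proper, closed, convex function on $\mathbb{R}\times\mathbb{R}^D$: it is the perspective of $\frac12\|\cdot\|^2$, extended by $+\infty$ for $b<0$. Hence the objective $\Phi(\rho,\vm):=F(\rho,\vm)+\frac{1}{2\gamma}\big((\rho-\hat\rho)^2+\|\vm-\hat\vm\|^2\big)$ is strongly convex and has a unique minimizer. It therefore suffices to verify optimality conditions for each candidate. I would split into the two cases of the statement: the minimizer lies on the boundary $\rho=0$, where necessarily $\vm=\mathbf 0$, or it lies in the interior $\rho>0$.

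\emph{Interior case.} For $\rho>0$, $F$ is smooth and the first-order conditions read
\begin{align*}
\frac{\vm}{\rho}+\frac{1}{\gamma}(\vm-\hat\vm)=\mathbf 0,\qquad -\frac{\|\vm\|^2}{2\rho^2}+\frac{1}{\gamma}(\rho-\hat\rho)=0 .
\end{align*}
The first equation gives $\vm=\hat\vm\,\rho/(\rho+\gamma)$. Substituting this into the second gives
\[
\frac{\gamma\|\hat\vm\|^2}{2(\rho+\gamma)^2}=\rho-\hat\rho ,
\]
and multiplying by $(\rho+\gamma)^2$ and expanding yields exactly the stated cubic $p(\rho)=0$. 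The cubic comes from $p(\rho)=(\rho-\hat\rho)(\rho+\gamma)^2-\frac{\gamma}{2}\|\hat\vm\|^2$.

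\emph{Boundary case.} Next I would characterize when $(0,\mathbf 0)$ is optimal, via $0\in\partial\Phi(0,\mathbf 0)$, i.e.\ $\frac1\gamma(\hat\rho,\hat\vm)\in\partial F(0,\mathbf 0)$. Since $F$ is positively homogeneous and is the support function of $K=\{(\alpha,\beta):\alpha+\frac12\|\beta\|^2\le 0\}$ (the standard conjugate computation for the perspective function), we have $\partial F(0,\mathbf 0)=K$. The condition then becomes
\[
\frac{\hat\rho}{\gamma}+\frac{\|\hat\vm\|^2}{2\gamma^2}\le 0 ,
\]
which is $\|\hat\vm\|^2\le-2\gamma\hat\rho$. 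This forces $\hat\rho\le0$, matching the first case of the statement. Alternatively, one can avoid conjugates by checking directly that $\Phi(\rho,\vm)\ge\Phi(0,\mathbf 0)$ using $\frac{\|\vm\|^2}{2\rho}+\frac{\|\vm\|^2}{2\gamma}-\frac{\langle\vm,\hat\vm\rangle}{\gamma}\ge -\frac{\rho\|\hat\vm\|^2}{2\gamma(\rho+\gamma)}$, which comes from minimizing in $\vm$.

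\emph{The otherwise case.} The main obstacle is showing that in the complementary case the cubic has a unique real root, that this root is positive, and that it gives the minimizer. The approach is as follows.
\begin{itemize}
\item By uniqueness of the minimizer and the boundary analysis, the minimizer has $\rho>0$, so its $\rho$-coordinate is a positive root of $p$.
\item For uniqueness of the real root, write $p(\rho)=g(\rho)-\frac{\gamma}{2}\|\hat\vm\|^2$ with $g(\rho)=(\rho-\hat\rho)(\rho+\gamma)^2$. The function $g$ has a double root at $-\gamma$ and a root at $\hat\rho$.
\item On $(-\gamma,\infty)$, any root of $p$ must satisfy $\rho\ge\hat\rho$. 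If $\hat\rho\ge-\gamma$, then $g$ is increasing for $\rho>\max(\hat\rho,-\gamma)$ (its derivative there is $(\rho+\gamma)(3\rho+\gamma-2\hat\rho)>0$), so there is exactly one root in that region.
\item The remaining task is to rule out roots $\rho\le-\gamma$ when $\|\hat\vm\|$ is small and $\hat\rho<-\gamma$, where $g$ is positive on $(\hat\rho,-\gamma)$. This is the delicate point. I expect that "unique real solution" should be read as "unique positive real solution". I would prove that version: $p(0)=-\gamma^2\hat\rho-\frac{\gamma}{2}\|\hat\vm\|^2<0$ exactly in the otherwise case, and on $(0,\infty)$, where $\rho+\gamma>0$, the relation $p=0$ forces $\rho>\hat\rho$ and $g$ is increasing there. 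Hence there is exactly one positive root. I would note this interpretation explicitly.
\end{itemize}
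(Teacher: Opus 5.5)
Your proposal is correct and follows the same skeleton as the paper's proof. Well-posedness comes from convexity and closedness of $F$. The boundary case comes from $\frac{1}{\gamma}(\widehat{\rho},\widehat{\vm})\in\partial F(0,\mathbf{0})$. The interior case comes from stationarity, which gives $\overline{\vm}=\widehat{\vm}\,\overline{\rho}/(\overline{\rho}+\gamma)$ and the cubic. You compute $\partial F(0,\mathbf{0})$ by identifying $F$ as the support function of $K$. The paper instead gets the same set directly from the subgradient inequality; its Claims 1 and 2 reduce to a discriminant condition for a quadratic in $\vm$. The two are equivalent. Your condition $\|\widehat{\vm}\|^2\le-2\gamma\widehat{\rho}$ keeps the factor $\gamma$, which the paper's proof text drops (the lemma statement has it).

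The substantive difference is the root of the cubic. The paper never argues uniqueness; it only says to solve the cubic. Your suspicion is justified: the literal claim is false.
\begin{itemize}
\item Take $\widehat{\vm}=\mathbf{0}$ and $\widehat{\rho}>0$, which is an ``otherwise'' instance. The cubic is $(\rho-\widehat{\rho})(\rho+\gamma)^2$, with distinct real roots $\widehat{\rho}$ and $-\gamma$.
\item Take $\gamma=1$, $\widehat{\rho}=-10$, $\|\widehat{\vm}\|^2=100$, again ``otherwise'' since $100>20$. Then $p(\rho)=(\rho+10)(\rho+1)^2-50$ satisfies $p(-10)=-50$, $p(-9)=14$, $p(-1)=-50$, $p(2)=58$, so it has three distinct real roots.
\end{itemize}
So the ``unique positive root'' reading is necessary, not merely convenient. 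Your argument is what actually pins down $\overline{\rho}$: $p(0)<0$ exactly in the otherwise case, an interior minimizer exists, and $g$ is strictly increasing on $(0,\infty)\cap[\widehat{\rho},\infty)$.

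One small fix: on $(0,\infty)$, $p(\rho)=0$ only forces $\rho\ge\widehat{\rho}$, with equality when $\widehat{\vm}=\mathbf{0}$. This is harmless, because there $g'(\rho)=(\rho+\gamma)(3\rho+\gamma-2\widehat{\rho})\ge(\rho+\gamma)^2>0$.
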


\begin{remark}
    The proximal mapping of the function \(F\), defined in Lemma~\ref{lemma:proxmain}, plays a central role in the {superior} performance of our proposed LADMM. 
    In contrast, many existing methods for solving Problem~\eqref{problem: main}, including ALM~\cite{benamou2000computational}, FISTA~\cite{yu2024fast},  and G-prox~\cite{jacobs2019solving}, do not explicitly utilize this proximal mapping. 
\end{remark}


\subsection{Linearized ADMM}\label{subsec:ladmm}

In this subsection, we derive LADMM updates to solve {Problem \eqref{prob:pre_admm}.}
{
 We employ a variable-splitting technique by introducing auxiliary variables $\overline{\vM}$ and $\overline{M_0}$ corresponding to the averaged quantities of $\vM$ and $M_0$, respectively. These auxiliary variables enable 
 the use of the exact proximal mapping described in Lemma \ref{lemma:proxmain}. With these variables, Problem \eqref{prob:pre_admm} can be reformulated as
}

\begin{equation}
\begin{aligned}\label{prob:ADMM1ag}
     \min_{\overline{M_0}, \overline{\vM}, (M_0, \vM) \in \mathcal{C}}& f(\overline{M_0},
\overline{\vM})\\
&\st \mathrm{Avg}(\vM) - \overline{\vM} = \mathbf{0}, \mathrm{Avg}_0(M_0) + b_{\mathrm{avg}} - \overline{M_0} = 0.
\end{aligned}
\end{equation}
Note that we intentionally retain the mass-conservation constraint \((M_0,\vM)\in\mathcal{C}\) in the formulation and will 
preserve it 
at each iteration of our algorithm by performing an exact projection. 

The AL function of \eqref{prob:ADMM1ag} is defined as 
\begin{align}\label{eq: aug1ag}
\mathcal{L}&_\beta(M_0, \vM, \overline{M_0}, \overline{\vM}; \mathbf{\Pi}, \Lambda)
=  f(\overline{M_0}, \overline{\vM})
   + \langle \mathbf{\Pi}, \mathrm{Avg}(\vM) - \overline{\vM} \rangle_{\Delta} \notag \\
   &+   \frac{ \beta}{2}\|\mathrm{Avg}(\vM) - \overline{\vM}\|_{\Delta}^2 
      + \langle \Lambda, \mathrm{Avg}_0(M_0) + b_{\mathrm{avg}} - \overline{M_0} \rangle_{\Delta} \notag \\
    &  +  \frac{ \beta}{2}\|\mathrm{Avg}_0(M_0) + b_{\mathrm{avg}} - \overline{M_0}\|_{\Delta}^2,
\end{align}
where $\Lambda$ and $\mathbf{\Pi}$ are the dual variables corresponding to the averaged $M_0$ and $\vM$ constraints respectively, and $\Delta = \prod_{d=0}^D \Delta_d $. 
With the AL function, {the LADMM iterative updates} can be written as follows: for each $k\ge0$,
\begin{align}
(\overline{M_0}^{k+1}, \overline{\vM}^{k+1})
&= {\argmin_{\substack{(\overline{M_0},\,\overline{\vM})}}
\cL_{\beta_k}(M_0, \vM, \overline{M}_0^k, \overline{\vM}^k; \mathbf{\Pi}^k, \Lambda^k)},
\label{eq:update_bar}
\\[2pt]
(M_0^{k+1}, \vM^{k+1}) &= \argmin_{(M_0, \vM) \in \mathcal{C}}
    \frac{1}{2}\,\lVert \vM - \vM^{k+\frac{1}{2}} \rVert^{2} + \frac{1}{2}\,\lVert M_0 - M_0^{k+\frac{1}{{2}}} \rVert^{2}, 
\label{eq:update_PM}
\\[2pt]
(\mathbf{\Pi}^{k+1}, \Lambda^{k+1})
&= (\mathbf{\Pi}^k, \Lambda^k)
+ \beta_k\Bigl(
\mathrm{Avg}(\vM^{k+1}) - \overline{\vM}^{k+1},\;
 \notag \\
&\qquad\qquad\qquad\qquad \mathrm{Avg}_0(M_0^{k+1}) + b_{\mathrm{avg}} - \overline{M_0}^{k+1}
\Bigr).\label{eq:dualupdate}
\end{align}
Here, we use LADMM~\cite{lin2011linearized, chan2012linearized, ouyang2015accelerated, lin2017extragradient, yang2015fast} instead of standard ADMM to update \((M_0^{k+1},\vM^{k+1})\), since the corresponding ADMM subproblem does not admit a closed-form solution.
More precisely, \eqref{eq:update_PM} is obtained by linearizing $\cL_{\beta_k}(\cdot, \cdot, \overline{M}_0^{k+1}, \overline{\vM}^{k+1}; \Pi^k, \Lambda^k)$ at $(M_0^k, \vM^k)$ and adding a proximal term. Since the terms involving $M_0$ and $\vM$ are independent and the gradient Lipschitz constants satisfy $\|\mathrm{Avg}_0^*\mathrm{Avg}_0\|_*
=
\cos\!\left(\frac{\pi}{2n_0}\right) \le 1$ and $\|\mathrm{Avg}^*\mathrm{Avg}\|_* 
=
\prod_{i=1}^{D}\cos\!\left(\frac{\pi}{2n_i}\right) \le 1,$
 we 
set
\begin{equation}\label{eq: intmPM_1ag}
\begin{aligned}
  &M_0^{k+\frac{1}{2}}
  = M_0^{k}
    - \frac{1}{\beta_k}
      \mathrm{Avg}_0^{*}\bigl(\Lambda^{k}
      + \beta_k\, (\mathrm{Avg}_0(M_0^k) + b_{\mathrm{avg}} - \overline{M_0}^{k+1}) \bigr)
    ,  \\
  &\vM^{k+\frac{1}{2}}
  = \vM^{k}
    - \frac{1}{\beta_k}
      \mathrm{Avg}^{*} \bigl(\mathbf{\Pi}^{k}
      + \beta_k\, \mathrm{Avg}(\vM^k) - \overline{\vM}^{k+1}\bigr).
\end{aligned}
\end{equation}

 The $(\overline{M}_0, \overline{\vM})$-update in \eqref{eq:update_bar} can be implemented by utilizing the proximal mapping in Lemma~\ref{lemma:proxmain}. 
%
The projection subproblem in \eqref{eq:update_PM} can be rewritten as 
\begin{equation}\label{eq:proj-subprob}
\begin{aligned}
    \min_{M_0,M}& \frac{1}{2}\|M_0-M_0^{k+ \frac12}\|^2 + \frac{1}{2}\|\vM-\vM^{k+\frac12}\|^2, \\
& \st  \cP_0 M_0 + \cP \vM + b = \mathbf{0}, \\
\end{aligned}
\end{equation}
{where $\cP_0$ and $ \cP$ are defined in \eqref{eq:parOp}.}
Let $\phi \in \overline{\mathbb{V}}$ be the multiplier to the linear constraint. 
To find the optimal solution of \eqref{eq:proj-subprob}, we form the KKT system as follows 
\begin{align} 
&M_0 - M_0^{k+\frac{1}{2}} + \cP_0^* \phi = \mathbf{0}, \label{eq:localPupdate}\\    
&\vM - \vM^{k + \frac{1}{2}} + \cP^* \phi = \mathbf{0}, \label{eq:localMupdate}\\
&\cP_0 M_0 + \cP \vM + b = \mathbf{0}. \label{eq:localcons}
\end{align}
 Substituting $M_0 = M_0^{k+\frac{1}{2}} - \cP_0^* \phi$ and $\vM = \vM^{k + \frac{1}{2}} - \cP^* \phi$ from equations \eqref{eq:localPupdate} and \eqref{eq:localMupdate}, respectively, to \eqref{eq:localcons} yields 
\begin{align}\label{eq:preSylv}
    \cP_0 \cP_0^* \phi + \cP \cP^* \phi = \cP_0 M_0^{k+\frac{1}{2}} + \cP \vM^{k + \frac{1}{2}} + b .
\end{align}
This is essentially a Poisson equation, which can be efficiently computed using the fast cosine transform, in a manner similar to the projection step in~\cite{yu2024fast}. 

Using discretization matrices to represent the operators $\cP_0$ and $\cP$ results in the 
equation
\begin{align}\label{eq:sylvgen}
      \sum_{d=0}^D \phi \times_d A_d = C:=\cP_0M_0^{k+\frac{1}{2}} + \cP \vM^{k+\frac{1}{2}} + b, 
\end{align}
where $\times_d$ represents the tensor multiplication~\cite{kolda2009tensor} in the $x_d$ direction for $d > 0$ and $\times_0$ represents the tensor multiplication in the $t$ direction, 
and $A_d$ is the matrix representation (that is given below for the case of $D=1$) of $\cP_d\cP_d^*$, for all $d = 0, 1, 2, \ldots, D$. An efficient way to solve \eqref{eq:sylvgen} is by using the eigendecomposition of matrices $A_d$ for all $d = 0, 1, \ldots, D$.

With the above derivations,  we 
give the pseudocode of our method in Algorithm \ref{alg:ladmm1ag}. 
\begin{algorithm}[htbp]
\caption{LADMM for dynamic OT}
\begin{algorithmic}[1]
\Require $\bm{\rho}_0,\bm{\rho}_1$
\State \textbf{Initialization:} 
$\vM_d^{0}(\vj) = 0 \text{ for } \vj \in \hat{\vJ}_d, \text{ for all } d = 0, 1, \ldots, D$. 

\For{$k=0,1,2,\ldots$}
\State Pick $\beta_k > 0$ and update $(\overline{M}_0, \overline{\vM})$ by
$$\begin{aligned}
\left(\overline{M_0}^{k+1}, \overline{\vM}^{k+1}\right)
= \mathrm{Prox}_{\left(F, \frac{1}{ \beta_k}\right)} \left(\mathrm{Avg}_0(M_0^k) + b_{\mathrm{avg}} - \frac{\Lambda^k}{ \beta_k}, \mathrm{Avg}(\vM^k) - \frac{\mathbf{\Pi}^k}{  \beta_k} \right). 
\end{aligned}$$

\State 
Let $(M_0^{k+\frac{1}{2}}, M^{k+\frac12})$ be defined in \eqref{eq: intmPM_1ag} and solve the following equation for $\phi^{k+1}$:
$$\begin{aligned}
\cP_0\cP_0^*\phi^{k+1} + \cP\cP^*\phi^{k+1} = \cP_0M_0^{k+\frac{1}{2}} + \cP\vM^{k+\frac12}+b. 
\end{aligned}$$
\State  Update $(M_0, \vM)$ by 
$$\begin{aligned}
M_0^{k+1} = M_0^{k+\frac12} - \cP_0^*\phi^{k+1},\
\vM^{k+1} &= \vM^{k+\frac12} - \cP^*\phi^{k+1}.
\end{aligned}$$

\State Update multipliers by 
$$\begin{aligned}
 \mathbf{\Pi}^{k+1}
&= \mathbf{\Pi}^k
+ \beta_k\left( \mathrm{Avg}(\vM^{k+1}) - \overline{\vM}^{k+1}\right), \\
\Lambda^{k+1} &= \Lambda^k + \beta_k\left(\mathrm{Avg}_0(M_0^{k+1}) + b_{\mathrm{avg}} - \overline{M_0}^{k+1}\right).
\end{aligned}$$
\EndFor
\end{algorithmic}
\label{alg:ladmm1ag}
\end{algorithm}

\begin{remark}(Explicit solution of the equation in \eqref{eq:sylvgen})
To explain how to solve \eqref{eq:sylvgen}, we start from the case of $D=1$. In this case, \eqref{eq:sylvgen} becomes the Sylvester equation
\begin{align}\label{eq: AB}
    A_0 \phi + \phi A_1 = C, 
\end{align}
where $A_0$ and $A_1$ are tri-diagonal positive semi-definite matrices: 
\begin{align}
    A_0 = \frac{1}{\Delta_0^{2}}
\begin{bmatrix}
1      & -1     & 0      & \cdots & 0 \\
-1     & 2      & -1     & \ddots & \vdots \\
0      & -1     & 2      & \ddots & 0 \\
\vdots & \ddots & \ddots & \ddots & -1 \\
0      & \cdots & 0      & -1     & 1
\end{bmatrix}_{n_0 \times n_0},
    \quad A_1 = \frac{1}{\Delta_1^{2}}
\begin{bmatrix}
1      & -1     & 0      & \cdots & 0 \\
-1     & 2      & -1     & \ddots & \vdots \\
0      & -1     & 2      & \ddots & 0 \\
\vdots & \ddots & \ddots & \ddots & -1 \\
0      & \cdots & 0      & -1     & 1
\end{bmatrix}_{n_1 \times n_1}, 
\end{align}
with $\Delta_0 = \frac{1}{n_0}$ and $\Delta_1 = \frac{1}{n_1}$ representing the grid spacings along the $t$- and $x_1$- directions, respectively. Let $A_0 = U \Sigma_0 U^\top$ and $A_1 = V \Sigma_1 V^\top$ be their eigen-decompositions.
Then 
$$\Sigma_0 U^\top \phi V + U^\top \phi V \Sigma_1 = U^\top C V,$$
which indicates (in an entry-wise manner)
$$(\Sigma_{0})_i (U^\top \phi V)_{i,j} + (U^\top \phi V)_{i,j} (\Sigma_1)_j = (U^\top C V)_{i,j}, \forall\, (i,j).$$
Hence, 
\begin{align}\label{eq: mainsylv}
    (U^\top \phi V)_{i,j} = \frac{(U^\top C V)_{i,j}}{(\Sigma_0)_i + (\Sigma_1)_j}, \forall\, (i,j).
\end{align}
We can obtain $U^\top \phi V$ via \eqref{eq: mainsylv} by computing the eigenvalues and eigenvectors of 
$A_0$ and $A_1$ in \eqref{eq: AB} once at the beginning of the algorithm and reusing them {for each update}. The matrix $A_0$ (similar for $A_1$) is well-studied in literature, and its eigenvalues and egivenvectors 
are given by 
\begin{equation}
\begin{aligned}\label{eq: eigval}
    &(\Sigma_0)_i = 4 n_0^2\sin^2\left(\frac{(i-1)\pi}{2n_0} \right), \forall i \in [n_0], \\
    &U^{(1)}_j = \frac{1}{\sqrt{n_0}}, U^{(i)}_j =  \sqrt{\frac{2}{n_0}} \cos \left(\left(j - \frac12 \right)\frac{(i-1)\pi}{n_0} \right), \forall j \in [n_0], \forall i \in [n_0] \setminus \{1\}.
    \end{aligned}
\end{equation}
Here $U^{(i)}$ is the $i$-th eigenvector corresponding to eigenvalue $(\Sigma_0)_i$ and the set $\{U^{(i)} \}_{i \in [n_0]}$ forms an orthonormal basis. {Note that $\Sigma_1$ and $V$ can be obtained by replacing $n_0$ with $n_1$ in \eqref{eq: eigval}.} Therefore, we can recover $\phi$ from $U^\top \phi V$ 
efficiently.

In a general case of $D\ge1$, we let $A_d = U_d \Sigma_d U_d^\top$ be the eigendecomposition for each $d=0,1,\ldots, D$. Then in analogy to \eqref{eq: mainsylv}, we have from \eqref{eq:sylvgen} that for any index $(i_0, i_1, \ldots, i_D)$,
\begin{align}\label{eq:sylv-general-D}
\begin{aligned}
 &   \left( \phi \times_0 U_0^\top \times_1 U_1^\top \times_2 \cdots \times_D U_D^\top \right)_{i_0, i_1, \ldots, i_D} \\
 = & \frac{\left( C \times_0 U_0^\top \times_1 U_1^\top \times_2 \cdots \times_D U_D^\top \right)_{i_0, i_1, \ldots, i_D}}{(\Sigma_0)_{i_0} + (\Sigma_1)_{i_1} + \cdots + (\Sigma_D)_{i_D}  }.
\end{aligned}    
\end{align}
With $\phi \times_0 U_0^\top \times_1 U_1^\top \times_2 \cdots \times_D U_D^\top$, we can easily obtain $\phi$ by multiplying $U_d$ to the $d$-th mode for all $d=0,1,\ldots,D$.
\end{remark}


The 
convergence result of LADMM has been established in the literature. We have the following theorem directly from Theorem 4.3 of~\cite{gao2017first} or Theorem 2.13 of \cite{xu2017accelerated}.
\begin{theorem}\label{The:conver}
Let $\big\{\big(\overline{M}_0^k, \overline{\vM}^k, M_0^k, \vM^k\big) \big\}_{k\ge0}$ be the sequence generated from Algorithm~\ref{alg:ladmm1ag} with $\beta_k=\beta, \forall\, k\ge0$ for some $\beta>0$. 
Define $$\big(\overline{M}_0^{\mathrm{erg},K}, \overline{\vM}^{\mathrm{erg},K}, M_0^{\mathrm{erg},K}, \vM^{\mathrm{erg},K}\big) = \frac{1}{K+1}\sum_{k=0}^K\big(\overline{M}_0^k, \overline{\vM}^k, M_0^k, \vM^k\big).$$
    Then $(M_0^{\mathrm{erg},K}, \vM^{\mathrm{erg},K}) \in \mathcal{C}$ and
    \begin{align*}
        \big |f(\overline{M}_0^{\mathrm{erg},K}, \overline{\vM}^{\mathrm{erg},K}) - f^* \big | + & \left\|\mathrm{Avg}_0(M_0^{\mathrm{erg},K}) + b_{\mathrm{avg}} - \overline{M}_0^{\mathrm{erg},K}\right\| \\
        &+ \left\|\mathrm{Avg}(\vM^{\mathrm{erg},K}) - \overline{\vM}^{\mathrm{erg},K}\right\|  = \mathcal{O}\left(\frac{1}{K} \right), 
    \end{align*}
    where $f^*$ denotes the optimal objective value of \eqref{prob:ADMM1ag}, and $\mathcal{C}$ is defined in \eqref{def:const_set}.
\end{theorem}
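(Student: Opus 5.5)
The plan is to cast Algorithm~\ref{alg:ladmm1ag} as an instance of the two-block linearized ADMM analyzed in Theorem 4.3 of~\cite{gao2017first} (or Theorem 2.13 of \cite{xu2017accelerated}), check its hypotheses, and read off the ergodic rate.

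\textbf{Step 1: identify the two blocks.} Problem \eqref{prob:ADMM1ag} has the form
\[
\min_{y,x}\; g(y)+h(x)\quad \st\quad Ax - y = c .
\]
Here $y=(\overline{M_0},\overline{\vM})$ and $g=f$. Next, $x=(M_0,\vM)$ and $h=\iota_{\mathcal C}$, the indicator of the affine set $\mathcal C$. The linear map is $A=(\mathrm{Avg}_0,\mathrm{Avg})$ and the constant is $c=(-b_{\mathrm{avg}},\mathbf 0)$. The inner product is $\langle\cdot,\cdot\rangle_\Delta$, and $\Delta$ can be absorbed by rescaling the multipliers.

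\textbf{Step 2: verify the hypotheses.}
\begin{itemize}
\item $f$ is a sum of the perspective-type functions $F$ from \eqref{eq: maincostfunction}, so it is proper, closed and convex.
\item $\iota_{\mathcal C}$ is closed and convex.
\item The $y$-update \eqref{eq:update_bar} is an exact minimization of the AL function, computed by Lemma~\ref{lemma:proxmain}.
\item The $x$-update \eqref{eq:update_PM} is the linearized step with proximal weight $\beta$. This is valid because $\beta I-\beta A^*A\succeq 0$, which follows from $\|\mathrm{Avg}_0^*\mathrm{Avg}_0\|_*\le 1$ and $\|\mathrm{Avg}^*\mathrm{Avg}\|_*\le1$. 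If the cited theorem requires strict inequality, note that both norms equal $\cos(\pi/(2n_d))<1$.
\item The dual step \eqref{eq:dualupdate} uses the same $\beta$.
\end{itemize}
We also need existence of a KKT point. I would argue this by feasibility plus a Slater-type point: when the boundary densities are positive in the interior, take a linear interpolation of densities with a flux that is feasible for $\mathcal C$. Then Fenchel–Rockafellar duality gives a saddle point, since all constraints other than the domain of $f$ are linear.

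\textbf{Step 3: invoke the theorem.} The cited result gives, for any saddle point,
\[
|g(\bar y^K)+h(\bar x^K)-f^*| + \|A\bar x^K-\bar y^K-c\| = \mathcal O(1/K),
\]
where the constant depends on the initial distance to the saddle point and on the dual solution norm. It remains to show that $h(\bar x^K)=0$, i.e., $(M_0^{\mathrm{erg},K},\vM^{\mathrm{erg},K})\in\mathcal C$. For $k\ge1$, each $(M_0^k,\vM^k)$ is a projection onto $\mathcal C$. The initial iterate $k=0$, however, is zero and need not be in $\mathcal C$. I would handle this either by treating the first projected iterate as the initialization (shifting the index), or by averaging over $k=1,\dots,K+1$. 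Since $\mathcal C$ is affine and hence convex, any average of points in it stays in it.

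\textbf{Main obstacle.} The hardest part is matching the exact hypotheses of the cited theorem. Two points need care: the nonsmooth, extended-valued $f$ must be handled exactly through its prox rather than through gradients, and the existence of a dual optimal solution must be established, since $f$ is infinite when a density entry is zero and the momentum is nonzero. I would first try the Slater argument above, using $\mathrm{ri}(\mathrm{dom} f)$.
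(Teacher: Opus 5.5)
Your proposal follows the same route as the paper, which obtains Theorem~\ref{The:conver} directly from Theorem 4.3 of \cite{gao2017first} or Theorem 2.13 of \cite{xu2017accelerated} without spelling out any hypotheses. Your checks fill in what the paper leaves implicit: the condition $\beta(I-A^*A)\succeq 0$, and the existence of a KKT point via a feasible point with $\mathrm{Avg}_0(M_0)+b_{\mathrm{avg}}>0$ entrywise, which tacitly requires $\bm{\rho}_0$ and $\bm{\rho}_1$ to have equal discrete mass so that $\mathcal{C}\neq\emptyset$, together with primal attainment, which follows from boundedness of the feasible sublevel sets. You are also right that the zero initial iterate is not in $\mathcal{C}$, and that $\overline{M}_0^0,\overline{\vM}^0$ are never defined by Algorithm~\ref{alg:ladmm1ag}, so the membership claim only holds once the ergodic average starts at $k=1$; this shift does not affect the $\mathcal{O}(1/K)$ rate.
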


\section{Distributed implementation}
The variables $M_0$ and $\vM$ scale with 
the total number of grid points 
$
\prod_{i=0}^D n_i,
$
thus the storage requirement increases exponentially with the spatial dimension, $D$.
To address this challenge, 
we propose solving \eqref{prob:ADMM1ag} by using multiple agents, where one agent 
may correspond to a GPU, 
or a CPU core. 
The distributed 
computation not only alleviates memory constraints but also enables significant computational speedup through parallel execution. By distributing the workload across multiple agents, the method allows many computations to be carried out in parallel, 
thereby reducing overall runtime. This parallel structure improves scalability and makes the approach particularly well-suited for large-scale problems.  
Below we discuss how the subproblems appearing in Algorithm~\ref{alg:ladmm1ag} can be solved across multiple agents 
and detail the communication requirements of the algorithm.

\subsection{Distributed Implementation of \texorpdfstring{$(\overline{M}_0, \overline{\vM})$}{M} and Multiplier Updates}\label{subsec:local_imp}

Suppose that $N$ agents are available. We partition the variables $M_0$, $\vM$, $\overline{M}_0$, $\overline{\vM}$, $\mathbf{\Pi}$, and $\Lambda$ across the agents by decomposing the computational grid along the $x_1$-dimension, i.e., the first spatial dimension. {For simplicity of communication, we adopt a domain-decomposition strategy in this work. We note, however, that other decomposition strategies are also possible within the proposed framework}. It is illustrated in Figure~\ref{fig:distrPM} for the case {of} $D=1$. Under this domain decomposition, 
we describe how the primal and dual updates in \eqref{eq:update_bar}, \eqref{eq:update_PM}, and \eqref{eq:dualupdate} can be implemented in a distributed manner across multiple agents.
\begin{figure}[htbp]
    \centering
    \includegraphics[width=0.95\linewidth]{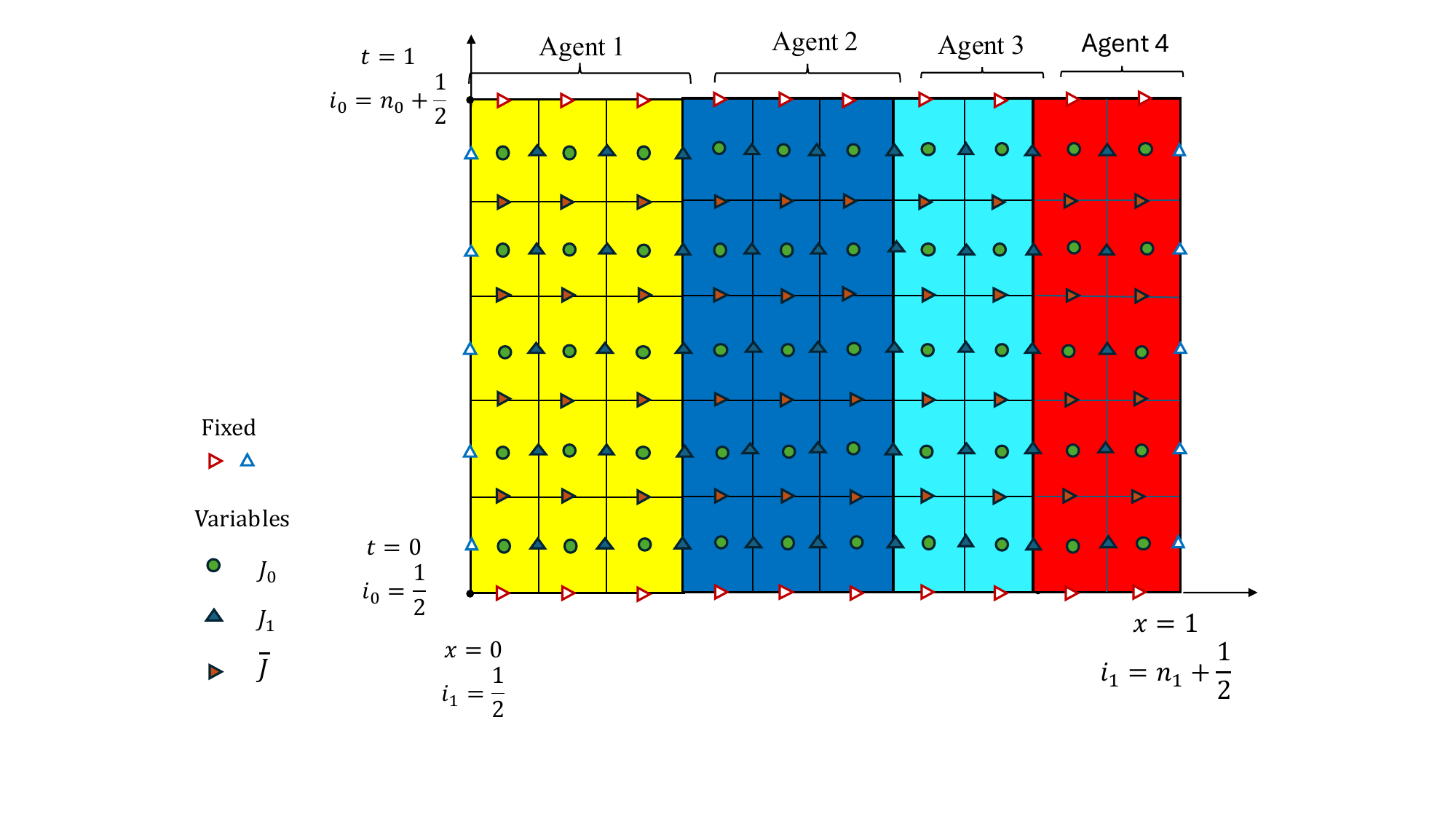}
    \caption{Illustration of the domain decomposition for the case $D=1$ {, $N=4, n_0 = 5, n_1 = 10$, where the domain is vertically decomposed along the spatial dimension.}  } 
    \label{fig:distrPM}
\end{figure}

According to our domain decomposition, each agent $i$ holds the following variables:
\begin{align}
    (M_0)_i, \vM_i, (\overline{M}_0)_i, \overline{\vM}_i, \mathbf{\Pi}_i, \Lambda_i.
\end{align}
{The variables on the shared boundary are always kept by the left agent.}
Hence these local variables do not overlap on the shared boundary and their concatenation will become exactly the entire variables $M_0$, $\vM$, $\overline{M}_0$, $\overline{\vM}$, $\mathbf{\Pi}$, and $\Lambda$.

To 
update $((\overline{M}_0)_i, \overline{\vM}_i)$ in \eqref{eq:update_bar} by agent $i$ locally, it suffices for agent $i$ 
to receive entries of $ \vM^{k}_{i-1}$ at the shared $x_1$-boundary between agents $i-1$ and $i$ (thus not the entire $ \vM^{k}_{i-1}$). 
Similarly, for the dual update to $(\mathbf{\Pi}_i, \Lambda_i)$ in \eqref{eq:dualupdate}, 
agent $i$ only needs to receive entries of $\vM^{k+1}_{i-1}$ at the shared $x_1$-boundary between agents $i-1$ and $i$.
The intermediate terms $M_0^{k+\frac{1}{2}}$ and $\vM^{k+\frac{1}{2}}$ defined in \eqref{eq: intmPM_1ag} for agent $i$ can be constructed by 
receiving entries of $\mathbf{\Pi}_{i-1}^{k}, \overline{\vM}_{i-1}^{k+1}, \vM_{i-1}^{k}$ from agent $i-1$ and entries of $\mathbf{\Pi}_{i+1}^{k}, \overline{\vM}_{i+1}^{k+1}, \vM_{i+1}^{k}$ from agent $i+1$, only at the shared $x_1$-boundary. 
This exchange of boundary information 
ensures that the operators $\mathrm{Avg}_0,  \mathrm{Avg}_0^*, \mathrm{Avg}, \text{ and }  \mathrm{Avg}^*$  are computed exactly.

\subsection{Distributed Implementation of \texorpdfstring{$({M}_0, {\vM})$}{M}-Updates}

{We next turn to the projection step in \eqref{eq:update_PM}, which requires solving \eqref{eq:sylvgen}. In contrast to the $(\overline{M}_0, \overline{\vM})$ and multiplier updates, this step involves more 
communication and 
requires a more careful treatment.}

\subsubsection{Distributed Implementation for Sylvester equation \texorpdfstring{\eqref{eq: AB}}{AB}}\label{sec:comm}

We first look at the case where $D=1$ in thorough detail, and the idea will follow similarly for $D>1$. 
When $D=1$, solving the projection problem \eqref{eq:update_PM} 
comes down to solving the Sylvester equation \eqref{eq: AB}, whose solution $\phi$ can be computed by using \eqref{eq: mainsylv}. 

To compute $\phi$ from \eqref{eq: mainsylv} in a distributed way, we partition $C$ and $\phi$ by the columns 
corresponding to the 
grid decomposition 
and partition $V$ by the rows; 
see Figure~\ref{fig:CUB} for illustration. Let $C_i$ and $\phi_i$ be the column blocks of $C$ and $\phi$ and $V_i$ be the row block of $V$ held by agent $i$ for each $i$. {In addition, each agent can do the multiplication by $U$ and $U^\top$ from the left. Notice that they do not need to store $U$ because the multiplication operation by $U$ (resp. $U^\top$) can be realized by a discrete cosine (resp. inverse discrete cosine) transformation.}

\begin{figure}[htbp] 
\centering
    \includegraphics[width=0.9\linewidth]{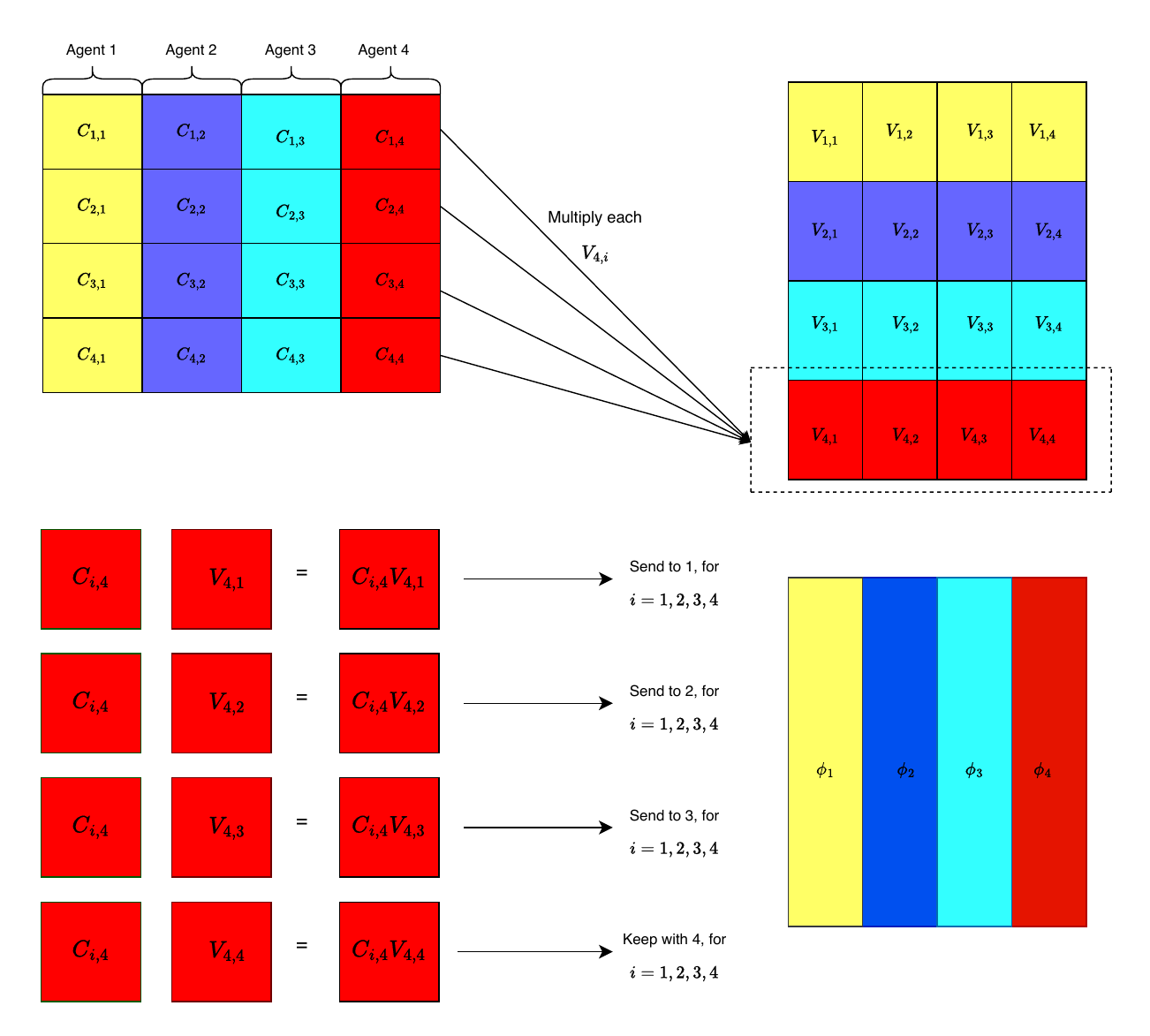}
\caption{Illustration of partition of  $C$, $V$, and $\phi$ for $N=4$, and communication using blocks of $C$ and $V$ at agent $4$. {Partition of $C$ and $V$ is depicted in the first row, partition of $\phi$ is depicted in the second column of the second row, and communication using the last column block of $C$ and last row block of $V$ is depicted in the first column of the second row.}}
\label{fig:CUB}
\end{figure}

Below, we explain in detail how to implement the multiplication $CV$ in a distributed way with the partition of $C$ and $V$ described above, such that each agent $i$ can obtain the $i$-th column block, denoted as $(CV)_i$, of $CV$. Let $C_{i,k}$ and $V_{i,k}$ be the $i$-th row and $k$-th column block of $C$ and $V$ respectively. Then the $i$-th row and $j$-th column block of $CV$ will be  
    $(CV)_{i, j} = \sum_{k=1}^N C_{i, k} V_{{k,j}}.$ 
Since $C_{i, k}$ and $V_{k,j}$ are held by agent $k$, to obtain $(CV)_{i, j}$ by agent $j$, it needs to receive the product $C_{i, k} V_{{k,j}}$ from all agents $k\neq j$ and then to perform the summation. Figure~\ref{fig:CUB} illustrates the case of $N=4$ and the message sent from agent 4 to all other agents.

{After forming the column block $(CV)_i$, the agent $i$ can obtain $U^\top (CV)_i$ by performing an inverse discrete cosine transformation to have the $i$-th column block of $U^\top C V$, denoted by $(U^\top C V)_i$}. Then we can easily obtain the $i$-th column block of the matrix $R$, whose $(a,b)$-th entry, denoted as $R[a,b]$, is $\frac{(U^\top C V)[a,b]}{(\Sigma_0)_a + (\Sigma_1)_b}$. From \eqref{eq: mainsylv}, it follows $U^\top \phi V = R$ and thus $\phi = U R V^\top$. To obtain $\phi$ in a distributed way, we use the same way of distributed implementation as we do for computing $U^\top C V$. This is illustrated in Figure~\ref{fig:RV}, and we do not repeat the details. With the $i$-th column block $\phi$ (denoted by $\phi_i$), as shown in Figure~\ref{fig:CUB}, agent $i$ can easily perform the update $(M_0^{k+1})_i = (M_0^{k+\frac{1}{2}})_i - \cP_0^* \phi_i$ locally without any more communication and $\vM^{k+1}_i=(\vM^{k + \frac{1}{2}})_i - \cP^* \phi_i$ {by receiving entries of $\phi_{i+1}$ from agent $i+1$ at the shared boundary.}

\begin{figure}[htbp] 
\centering
    \includegraphics[width=\linewidth]{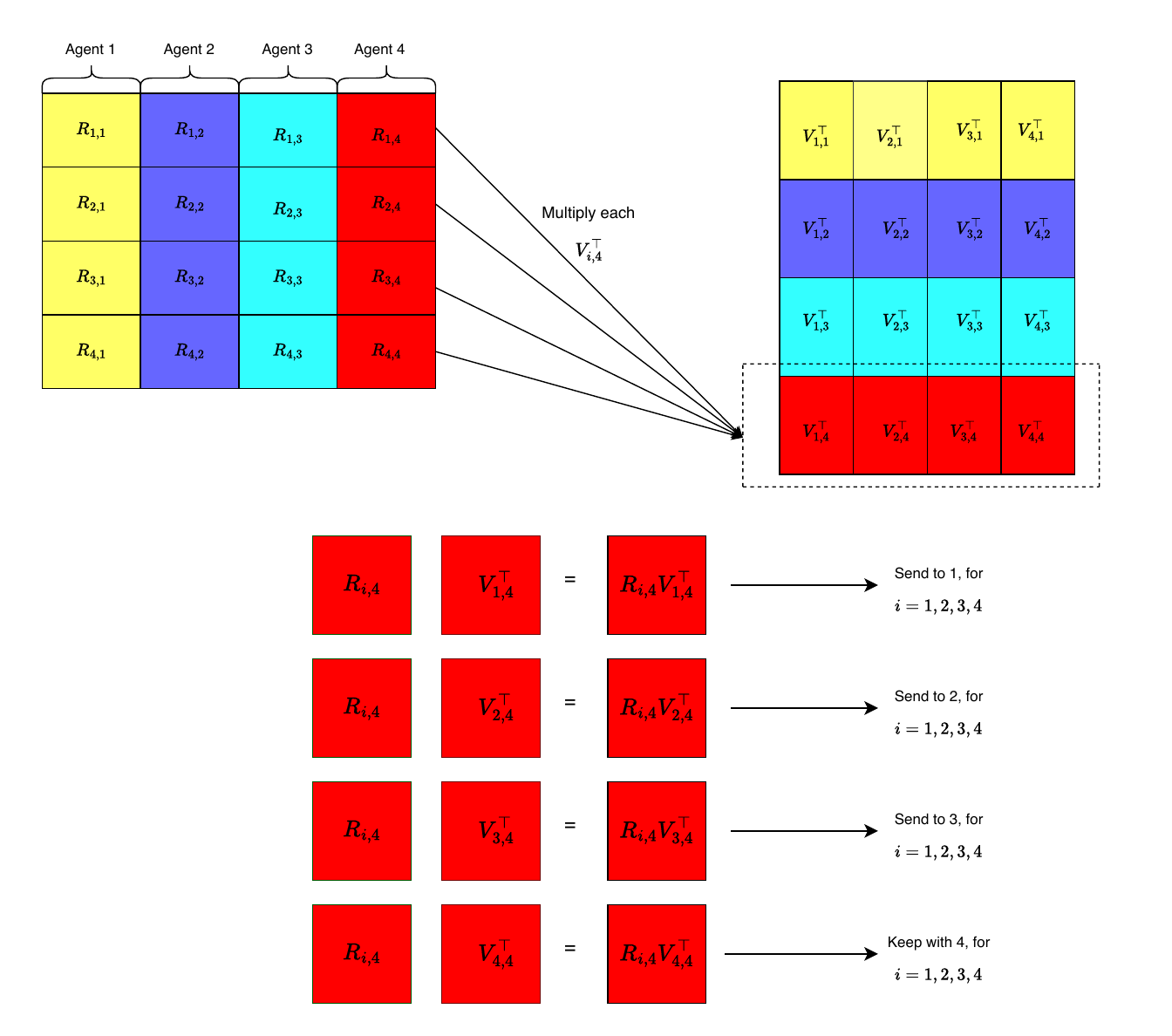}
\caption{Illustration of computation and communication using blocks of $R$ and $V^\top$ at agent 4 for $N=4$. {Partition of $R$ and $V^\top$ is depicted in the first row and communication using the last column block of $R$ and last row block of $V^\top$ is depicted in the second row.}}
\label{fig:RV}
\end{figure}

\subsubsection{Generalization to \texorpdfstring{$D>1$}{D>1}}

{When $D>1$, both $C$ and $\phi$ are $(D+1)$-way tensors. We still partition $C$ along the first space dimension $x_1$, as illustrated in the left picture of Figure~\ref{fig:CUB3D}, and we shard $\phi$ in the same manner. In analogy to the case of $D=1$ described in Section~\ref{sec:comm}, we partition $U_1$ by rows, and for all $d\neq 1$, each agent $i$ can perform the multiplication of $U_d$ to the $i$-th block of $C$ along the $d$-th mode by the discrete cosine transformation. The idea is the same as that for the case of $D=1$ and illustrated in the right picture of Figure~\ref{fig:CUB3D} for the case of $D=2$. We do not repeat it here.}

\begin{figure}[htbp] 
\centering
    \includegraphics[width=\linewidth]{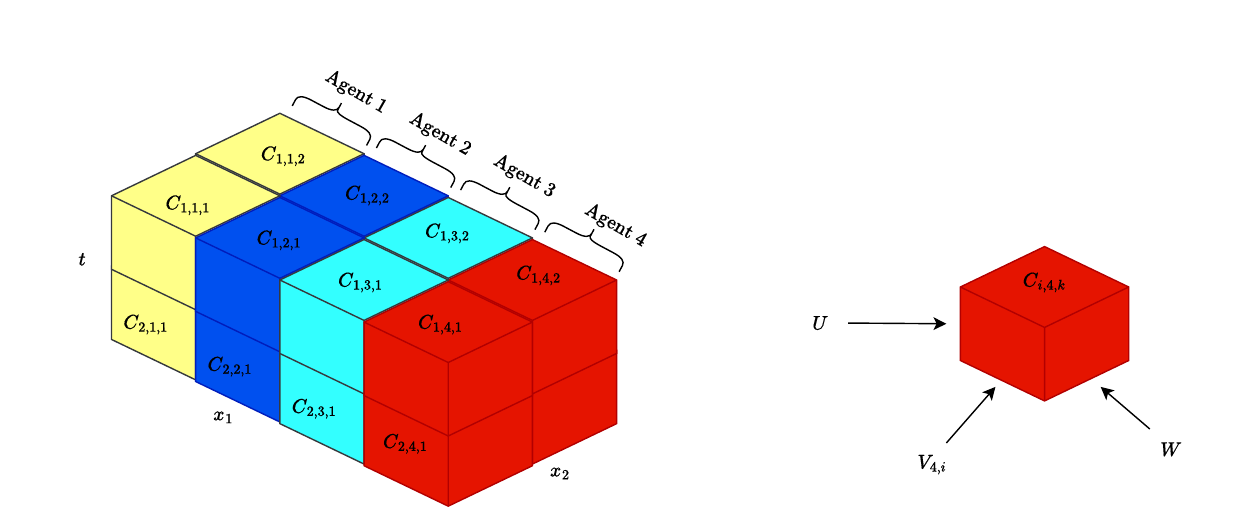}
\caption{Left: illustration of partition of $C$ along the first spatial dimension for $N = 4$ and $D=2$. 
Right: illustration of how the eige-matrices $U$, $V$, and $W$ act on a block of $C$.}
\label{fig:CUB3D}
\end{figure}

\section{Numerical Experiments}\label{sec:numexp}
We present numerical experiments for solving \eqref{prob:pre_admm}. In the nondistributed setting, we compare the proposed method, LADMM, with several existing approaches, including FISTA \cite{yu2024fast}, G-prox \cite{jacobs2019solving}, and ALM \cite{benamou1999numerical, benamou2000computational}. 
For brevity, we denote these methods by ``F'', ``G'', and ``A'', respectively, and use ``L'' to refer to the proposed LADMM. 
We also test the proposed method in a distributed setting.
All experiments are performed either in one- (i.e., $D=1$) or two-dimensional (i.e., $D=2$) spatial settings.

\subsection{Parameter setting}
For the proposed LADMM, we only need to tune the penalty parameter, and we choose it by 
\begin{align}
{\beta_k=\beta_0, \forall\, k \le 1000},\quad    \beta_k = \beta_0 k, \forall\, k > 1000,
\end{align}
where $\beta_0$ is selected via a grid search over the set $\{10^{-5}, 10^{-4}, 10^{-3}, 10^{-2}, 10^{-1},$ $ 1, 10\}$.  
{We update $\beta$ linearly after $1000$ iterations. This choice is motivated by numerical observations 
that the dual residual decreases more rapidly than the primal residual {in the beginning}. The linear update strategy balances this decreasing rate, resulting in improved overall convergence behavior.}

{FISTA~\cite{yu2024fast} performs projected gradient updates by line search. Following the setting in its code, we set its initial estimate of gradient Lipschitz constant to $L_0=10$ and the increase rate $\eta=1.2$ for the line-search of each update} {with an upper bound $L_{\max} = 10^5$.} 

{ALM~\cite{benamou2000computational} reformulates the objective in \eqref{problem: main} using a Legendre transform and solves its dual problem.} The penalty parameter \(r\) of ALM~\cite{benamou2000computational} is selected via grid search over the set $\{10^{-3}, 10^{-2},$ $ 10^{-1}, 1, 10, 10^2, 10^3\}$ for all numerical experiments. For G-prox~\cite{jacobs2019solving}, its primal proximal parameter \(\tau_p\) is chosen through grid search over \(\left\{ 10^{-5}, 10^{-4}, 10^{-3}, 10^{-2}, 10^{-1}, 1\right\}\), while the dual proximal parameter is set to \(\tau_d=\frac{1}{\tau_p}\). 
We observe that G-prox with 
$\tau_p > 1$ can diverge 
in some experiments. 
This behavior is likely attributable to the fact that the corresponding subproblem involves computing the proximal mapping of the function
\begin{align}
h(M_0, \vM) := \frac{\|\mathrm{Avg}(\vM)\|^2}{2\bigl(\mathrm{Avg}_0(M_0) + b_{\mathrm{avg}}\bigr)},
\end{align}
which does not admit a closed-form solution. {Hence}, the proximal mapping is obtained only approximately. Consequently, we restrict our grid search to values of $\tau_p \leq 1$. 

For all experiments, we initialize the primal and dual variables with the same settings: 
\begin{align}
    \vM^0 = \mathbf{0}, \quad M_0^0 = 0, \quad \Lambda^0 = 0, \quad \mathbf{\Pi}^0 = \mathbf{0}, \quad \phi^0 = 0, 
\end{align}
where $\phi$ is the dual variable for ALM and G-prox. The stopping condition { for ALM~\cite{benamou2000computational}, G-prox~\cite{jacobs2019solving}, and FISTA~\cite{yu2024fast}} is given by
\begin{align}\label{eq:stopcond}
    \max\!\left( \mu_k \sum_{i=0}^D \|M_i^{k+1} - M_i^k\|,\,  \left\| \sum_{i=0}^D \cP_i M_i^{k+1} + b \right\| \right) \leq 10^{-4},
\end{align}
where 
\begin{align*}
    \mu_k &= \beta_k, \quad \text{for LADMM and ALM},\\
    \mu_k &= 1, \quad \text{for FISTA, and G-prox }.
\end{align*}
Along with the condition in~\eqref{eq:stopcond}, LADMM {further needs} the condition
\begin{align}\label{eq:stopcond2}
\max\!\left(
\|\mathrm{Avg}(\vM^k) - \overline{\vM}^k\|,
\;\|\mathrm{Avg}_0(M_0^k) + b_{\mathrm{avg}} - \overline{M}_0^k\|
\right)
\leq 10^{-4}.
\end{align}

The first quantity in the stopping condition~\eqref{eq:stopcond} measures the dual error bound and the second term measures the primal error.
In all tables reported in this section, we present the quantities \texttt{Iter}, \texttt{Time}(s), $\texttt{Obj}_{\mathrm{proj}}$, and  \texttt{Pres}. 
Here, \texttt{Iter} denotes the total number of iterations required for the algorithm to terminate, while \texttt{Time}(s) represents the total computational time reported in seconds.
The quantity $\texttt{Obj}_{\mathrm{proj}}$ denotes the objective value obtained after projecting the final iterate $(M_0^K, M^K)$ onto the constraint set $\mathcal{C}$ defined in~\eqref{cons: main}. 
$\texttt{Obj}_{\mathrm{proj}} \,(10^{-2})$ simply means that values are reported after scaling by $10^{-2}$ (e.g., $0.0556$ is shown as $5.56$). Finally, \texttt{Pres} denotes the primal residual, i.e., the second term on the left-hand side of~\eqref{eq:stopcond}. 

Throughout the numerical experiments, { $g_{\mu, \Sigma}(\cdot)$ denotes the Gaussian density function with mean $\mu$ and covariance matrix $\Sigma$.} 

\subsection{Numerical results in a nondistributed setting}\label{sec:numexp1ag}
In this {subsection}, we test the algorithms in a nondistributed setting. 
For the one-dimensional spatial setting, 
we consider three different types of $\bm{\rho}_0$ and $\bm{\rho}_1$ 
in Sections~\ref{subsec: unnormalizedsingleagent}, \ref{subsec: normalizedsingleagent},  and \ref{subsec:numer_set3_1ag}. 
For the two-dimensional spatial setting, the numerical comparison is presented in Section~\ref{sec:num2d1agent}.

\subsubsection{Unnormalized
\texorpdfstring{$\bm{\rho}_0$ and $\bm{\rho}_1$}
{rho0 and rho1}
with an Added Positive Constant}\label{subsec: unnormalizedsingleagent}
In the first experiment, we generate $\bm{\rho}_0$ and $\bm{\rho}_1$ by 
\begin{equation}\label{eq:rhosetting_notnormal}
\begin{aligned}
    &
    ({{\rho}}_0)_i = g_{1/3, 0.1}\left(\frac{i}{n_1}\right) + \kappa, \quad ({{\rho}}_1)_i = g_{2/3, 0.1}\left(\frac{i}{n_1}\right) + \kappa, 
    \text{ for all }   i = 0, 1, \ldots, n_1.
\end{aligned}
\end{equation}
The parameter \(\kappa\in\{0.1,0.01\}\) in \eqref{eq:rhosetting_notnormal} is introduced to prevent the density from becoming too close to zero, as is commonly done in existing methods.

\begin{table}[htbp]
\caption{Comparison of FISTA~\cite{yu2024fast}, ALM~\cite{benamou2000computational}, G-prox~\cite{jacobs2019solving}, and LADMM in 1D space with $\bm{\rho}_0, \bm{\rho}_1$ in \eqref{eq:rhosetting_notnormal}, $\kappa = 0.1, \text{ and the number of time steps is } 64$. The best performance is highlighted in bold.}
\label{tab:1deasycase1}
\centering
\scriptsize
\setlength{\tabcolsep}{1pt}
\resizebox{0.9\textwidth}{!}{
\begin{tabular}{|c|cccc|cccc|cccc|cccc|}
\hline
\multirow{2}{*}{$n_1$} & \multicolumn{4}{c|}{\texttt{Iter}} & \multicolumn{4}{c|}{\texttt{Time}(s)} & \multicolumn{4}{c|}{$\texttt{Obj}_{\text{proj}}(10^{-2})$} & \multicolumn{4}{c|}{\texttt{Pres}} \\
\cline{2-17}
 & A & F & G & L & A & F & G & L & A & F & G & L & A & F & G & L \\
\hline
256 & 1074 & 843 & 1986 & \textbf{490} & 10 & 14 & 14 & \textbf{5} & 5.24025 & 5.23701 & 5.23702 & \textbf{5.23700} & 2.5E-5 & \textbf{5.7E-13}  & 3.3E-6 & 6.1E-11 \\
512 & 2119 & 839 & 2228 & \textbf{569} & 26 & 27 & 30 & \textbf{8} & 5.24036 & 5.23714 & 5.23715 & \textbf{5.23714} & 7.8E-6 & \textbf{2.1E-12} & 7.8E-6 & 1.3E-10 \\
1024 & 3834 & \textbf{839} & 2434 & 849 & 103 & 46 & 54 & \textbf{28} & 5.24037 & 5.23718 & 5.23718 & \textbf{5.23717} & 9.9E-5 & \textbf{8.7E-12} & 1.9E-5 & 5.1E-10  \\
2048 & 7014 & 839 & 2910 & \textbf{636} & 378 & 90 & 139 & \textbf{44} & 5.24040 & 5.23719 & 5.23719 & \textbf{5.23718}  & 9.9E-5 & \textbf{4.3E-11} & 9.9E-5 &  5.0E-09 \\
4096 & 9300 & 838 & 5644 & \textbf{671} & 1160 & 262 & 620 & \textbf{82} & 5.48069 & 5.23719 & 5.23719 & \textbf{5.23718} & 1.5E-4 & \textbf{1.5E-10} & 9.9E-5 & 1.0E-09 \\
\hline
\end{tabular}}
\end{table}

\begin{table}[htbp]
\caption{Comparison of FISTA~\cite{yu2024fast}, ALM~\cite{benamou2000computational}, G-prox~\cite{jacobs2019solving}, and LADMM in 1D space with $\bm{\rho}_0, \bm{\rho}_1$ in \eqref{eq:rhosetting_notnormal}, $\kappa = 0.01, \text{ and the number of time steps is } 64$. The best performance is highlighted in bold.}
\label{tab:1deasycase2}
\centering
\scriptsize
\setlength{\tabcolsep}{2pt}
\resizebox{\textwidth}{!}{
\begin{tabular}{|c|cccc|cccc|cccc|cccc|}
\hline
\multirow{2}{*}{$n_1$} & \multicolumn{4}{c|}{\texttt{Iter}} & \multicolumn{4}{c|}{\texttt{Time}(s)} & \multicolumn{4}{c|}{$\texttt{Obj}_{\text{proj}}(10^{-2})$} & \multicolumn{4}{c|}{\texttt{Pres}} \\
\cline{2-17}
 & A & F & G & L & A & F & G & L & A & F  & G & L & A & F & G & L \\
\hline
256 & 2103 & 1624 & 1950 & \textbf{1002} & {15} & 36 & 14 & \textbf{7} & 5.52368 & 5.50488 & 5.50469 & \textbf{5.50467} & 1.2E-5 & \textbf{1.60E-12} & 3.4E-6 & {2.3E-12} \\
512 & 2446 & 1777 & 2187 & \textbf{1001} & 29 & 109 &  33 & \textbf{13} & 5.50496 & 5.50496 & 5.50472 & \textbf{5.50471} & 7.5E-6 & \textbf{4.65E-12}  & 6.2E-6 & \textbf{6.1E-12} \\
1024 & 2673 & 1782 & 2389 & \textbf{856} & 60 & 185  & 71 & \textbf{22} & 5.50497 & 5.50497  & 5.50473 & \textbf{5.50472} & 1.3E-5 & \textbf{1.84E-11} & 3.8E-5 & \textbf{5.5E-11}  \\
2048 & 2921 & 1783 & 2584 & \textbf{1001} & 145 & 449 & 121 & \textbf{66} & 5.50498 & 5.50498 & 5.50473 & \textbf{5.50473}  & 7.3E-5 & \textbf{9.16E-11} & 7.6E-5 & \textbf{1.4E-10} \\
4096 & 4249 & 1783 & 2780 & \textbf{1002} & 471 & 1185 & 295 & \textbf{133} & 5.50498 & 5.50498 & 5.50473 & \textbf{5.50472} & 9.9E-5 & \textbf{4.99E-10} & 5.3E-5 & \textbf{9.8E-10}  \\
\hline
\end{tabular}}
\end{table}

For both values of $\kappa$ reported in Tables~\ref{tab:1deasycase1} and \ref{tab:1deasycase2}, LADMM consistently outperforms the competing methods in terms of computational time and objective value. 

\subsubsection{Normalized
\texorpdfstring{$\bm{\rho}_0$ and $\bm{\rho}_1$}
{rho0 and rho1}
with Added Positive Constant}\label{subsec: normalizedsingleagent}
In this experiment, we generate $\bm{\rho}_0$ and $\bm{\rho}_1$ by
\begin{equation}\label{eq:rhosetting}
\begin{aligned}
    &(\tilde{{\rho}}_0)_i = g_{1/3, 0.1}\left(\frac{i}{n_1}\right) + 0.1, \quad (\tilde{{\rho}}_1)_i = g_{2/3, 0.1}\left(\frac{i}{n_1}\right) + 0.1, \text{ for all } i = 0, 1, \ldots, n_1,\\
    &\bm{\rho}_0 = \frac{\tilde{\bm{\rho}}_0} {\| \tilde{\bm{\rho}}_0\|_1 }, \quad \bm{\rho}_1 = \frac{\tilde{\bm{\rho}}_1} {\| \tilde{\bm{\rho}}_1\|_1}.\\
\end{aligned}
\end{equation}
The 
results by three compared methods are presented in Table~\ref{tab:case2}. We observe that G-prox  satisfies the stopping criteria before reaching the maximum number of iterations for smaller grid sizes, i.e., 
$n_1\le 512$ {while it does not 
for $n_1 > 512$}.  
When $n_1 \leq 2048$, we see that the performance between ALM and LADMM is comparable as ALM is better in terms of total iterations but LADMM is better in terms of the objective value and primal residual. For $n_1=4096$, we can see clearly that LADMM dominates in all reported metrics.

\begin{table}[htbp]
\caption{Comparison of ALM~\cite{benamou2000computational}, G-prox~\cite{jacobs2019solving}, and LADMM in 1D space with $\bm{\rho}_0, \bm{\rho}_1$ setting in \eqref{eq:rhosetting} $\text{and the number of time steps} = 64$. The best performance is highlighted in bold.}
\label{tab:case2}
\centering
\scriptsize
\setlength{\tabcolsep}{1pt}
\resizebox{0.8\textwidth}{!}{
\begin{tabular}{|c|ccc|ccc|ccc|ccc|}
\hline
\multirow{2}{*}{$n_1$} & \multicolumn{3}{c|}{\texttt{Iter}} & \multicolumn{3}{c|}{\texttt{Time}(s)}  & \multicolumn{3}{c|} {$\texttt{Obj}_{\text{proj}}$} & \multicolumn{3}{c|}{\texttt{Pres}} \\
\cline{2-13}
 & A  & G & L & A  & G & L & A  & G & L & A  & G & L\\
\hline
256 & \textbf{175}  & 5427 & {304} & \textbf{1.2}  & 27.2 & {3.1}  & 1.86057E-4  & 1.86046E-4 & \textbf{1.86045E-4} & 9.9E-5 & 9.9E-5 & \textbf{1.1E-13}  \\
512 & \textbf{261}  & 9315 & {276} & \textbf{3.5} & 82.9 & {4.2}  & 9.30438E-5  & 9.30255E-5 & \textbf{9.30254E-5} & 9.9E-5  & 9.9E-5 & \textbf{1.5E-13} \\
1024 & \textbf{205}  & 1E4 & {248} & \textbf{5.6}  & 201.1 & {6.4}  & 4.65221E-5 & 4.65137E-5 & \textbf{4.65130E-5} & 9.9E-5 & 3.6E-4 & \textbf{2.4E-13} \\
2048 & \textbf{148}  & 1E4 & {222} & \textbf{8.8}  & 342.3 & {12.6}  & 2.32583E-5  & 2.32626E-5 & \textbf{2.32566E-5} & 9.9E-5  & 6.9E-4 & \textbf{4.7E-13} \\
4096 & 210 & 1E4 & \textbf{198} & 26.5 & 732.7 & \textbf{22.9}   & 1.16585E-5 & 1.16529E-5 & \textbf{1.16283E-5} & 9.9E-5 & 8.2E-3 & \textbf{6.3E-13} \\
\hline
\end{tabular}}
\end{table}

\begin{remark}
    With $\kappa = 0.1$ and $\kappa = 0.01$ in~\eqref{eq:rhosetting_notnormal}, the gradient Lipschitz constant of \eqref{eq: maincostfunction} is on the order of $10^3$ and $10^6$, respectively.
In the $\bm{\rho}_0, \bm{\rho}_1$ setting in \eqref{eq:rhosetting} and \eqref{eq:rhosetting_notnormal_case3}, the minimum density value is on the order of $10^{-8}$, which leads to extremely large gradient Lipschitz constant of~\eqref{eq: maincostfunction}. In this regime, FISTA fails to converge due to a lower bound on the stepsize imposed by the line search procedure.
For this reason, FISTA is omitted from the remaining tables.
\end{remark}

\subsubsection{Unnormalized
\texorpdfstring{$\bm{\rho}_0$ and $\bm{\rho}_1$}
{rho0 and rho1}
with No Added Positive Constant}\label{subsec:numer_set3_1ag}
In this experiment, we set $\bm{\rho}_0$ and $\bm{\rho}_1$ as follows:
\begin{equation}\label{eq:rhosetting_notnormal_case3}
\begin{aligned}
    &{({\rho}}_0)_i = g_{\frac{1}{3},\,0.1}\left(\frac{i}{n_1}\right), \quad
    {({\rho}}_1)_i = g_{\frac{2}{3},\,0.1}\left(\frac{i}{n_1}\right), 
    ~~\text{ for all } i = 0, 1, \ldots, n_1.
\end{aligned}
\end{equation}
{Compared to the previous numerical experiments}, this setting represents the most challenging case, as we do not add any positive constant to keep the densities uniformly bounded away from zero. We see that the smallest initial and terminal density values are around $10^{-10}$.

For ALM, we set $r=1$, and for G-prox, $(\tau_d, \tau_p)=(1,1)$, both selected via empirical tuning.
We first obtain a numerical approximation of the minimum objective value for the discretized problem; we call it $\mathrm{obj}_{\mathrm{num}}^*$.
To calculate the optimal objective value, we set the stopping tolerance to  $10^{-8}$ and maximum iterations to $3\times 10^4$ and report the results in Table~\ref{table: objstar}. From Table~\ref{table: objstar}, we observe that LADMM achieves the {lowest} objective value and the lowest constraint violation for all values of $n_1$. We also note that G-prox and ALM fail to {reach the stopping tolerance} $10^{-8}$ 
within the maximum number of iterations in all tested values of $n_1$.
\begin{table}[htbp]
\caption{Comparison of ALM~\cite{benamou2000computational}, G-prox~\cite{jacobs2019solving}, and LADMM in 1D space with $\bm{\rho}_0, \bm{\rho}_1$ setting in \eqref{eq:rhosetting_notnormal_case3} and $\text{the number of time steps is } 64$ to find the ``optimal" objective value. The best performance is highlighted in bold.}
\label{table: objstar}
\centering
\scriptsize
\setlength{\tabcolsep}{2pt}
\resizebox{0.8\textwidth}{!}{
\begin{tabular}{|c|ccc|ccc|cc|ccc|ccc|}
\hline
\multirow{2}{*}{$n_1$} & \multicolumn{3}{c|}{\texttt{Iter}} & \multicolumn{3}{c|}{$\texttt{Obj}_{\mathrm{proj}}(10^{-2})$} & \multicolumn{2}{c|}{$| \texttt{Obj} - \texttt{Obj}_{\mathrm{proj}}|$} &  \multicolumn{3}{c|}{\texttt{Pres}} \\
\cline{2-12}
 & A  & G & L & A & G & L & A  & G  & A  & G & L\\
 \hline
256 & 3E4 & 3E4 & \textbf{10739}   & 5.54576  & 5.54301 & \textbf{5.54298}  & 4E-14 & 6E-8 & 3.02E-8 & 2.43E-2 & \textbf{2.44E-13}\\
512 & 3E4   & 3E4 & \textbf{15553} & 5.54601   & 5.54340 & \textbf{5.54300}  & 4E-11 &  3E-6 & 8.12E-9 & 1.76 & \textbf{1.13E-13}\\
1024 & 3E4  & 3E4 & \textbf{14050}  & 5.54614 &  5.54395 & \textbf{5.54302}  & 4E-10 & 9E-6 & 2.06E-8 & 4.61 & \textbf{2.27E-13} \\
2048 & 3E4  & 3E4 & \textbf{16176}  & 5.54620 &  5.54379 & \textbf{5.54313} & 1E-11 & 7E-6 & 1.05E-7 & 7.35 & \textbf{4.54E-13} \\
4096 & 3E4  & 3E4 & \textbf{15766} & 5.54623 &  5.54353 & \textbf{5.54301} & 1E-8  & 1E-6 & 9.20E-5 & 12.2 & \textbf{1.05E-12} \\
\hline
\end{tabular}}
\end{table}

 We use the { best objective values obtained by LADMM in the $7$-th column of Table~\ref{table: objstar} as $\mathrm{obj}^*_{\mathrm{num}}$ to calculate $|\mathrm{obj} - \mathrm{obj}^*_{\mathrm{num}}|$ and report it in Table~\ref{table: case3}}, {where we set the stopping tolerace to $10^{-4}$ and maximum number of iterations to $10^4$.} From Table~\ref{table: case3}, we see that G-prox and ALM both struggle with the setting of initial and terminal densities in \eqref{eq:rhosetting_notnormal_case3}. In fact, G-prox fails to satisfy the stopping condition within the maximum iteration number with all values of $n_1$ while ALM fails for $n_1 > 1024$.

\begin{table}[htbp]
\caption{Comparison of ALM~\cite{benamou2000computational}, G-prox~\cite{jacobs2019solving}, and LADMM in 1D space with $\bm{\rho}_0, \bm{\rho}_1$ setting in \eqref{eq:rhosetting_notnormal_case3} and $\text{the number of time steps is } 64$. The best performance is highlighted in bold.}
\label{table: case3}
\centering
\scriptsize
\setlength{\tabcolsep}{1.8pt}
\resizebox{0.9\textwidth}{!}{
\begin{tabular}{|c|ccc|ccc|ccc|ccc|}
\hline
\multirow{2}{*}{$n_1$} & \multicolumn{3}{c|}{\texttt{Iter}} & \multicolumn{3}{c|}{\texttt{Time}(s)} & \multicolumn{3}{c|}{$ |\mathrm{obj}-\mathrm{obj}^*_{\mathrm{num}}|$} & \multicolumn{3}{c|}{\texttt{Pres}} \\
\cline{2-13}
 & A  & G & L & A  & G & L & A  & G & L & A  & G & L\\
\hline
256 & 2563 & 1E4 & \textbf{1029} & 17.91  & 66.88 & \textbf{11.98} & 3.24E-5 & 3.96E-7 & \textbf{5.12E-9}  & 9.85E-5 & 2.08 & \textbf{5.68E-14}  \\
512 & 5020   & 1E4 & \textbf{1060} & 53.93  & 133.09 & \textbf{20.37} & 7.15E-5  & 3.99E-7 & \textbf{8.76E-9}  & 9.97E-5 & 0.27 & \textbf{1.14E-13}\\
1024 & 9477  & 1E4 & \textbf{1051} & 218.58  & 241.52 & \textbf{28.92} & 7.36E-5 & 4.11E-7 & \textbf{5.44E-8}  & 9.99E-5 & 10.17 & \textbf{2.27E-13} \\
2048 & 1E4  & 1E4 & \textbf{1067} & 484.20  & 488.09 & \textbf{74.75} & 8.82E-5 & 8.84E-6 & \textbf{7.17E-8} & 4.20E-4 & 3.68 & \textbf{4.55E-13} \\
4096 & 1E4  & 1E4 & \textbf{1066} & 1001.90 & 955.28 & \textbf{149.22} & 8.89E-5 & 9.12E-6 & \textbf{8.14E-8} & 7.15E-4 & 16.40 & \textbf{9.09E-13} \\
\hline
\end{tabular}}
\end{table}

\begin{figure}[htbp]
  \centering

  \begin{minipage}{0.4\textwidth}
    \centering
    \includegraphics[width=\linewidth]{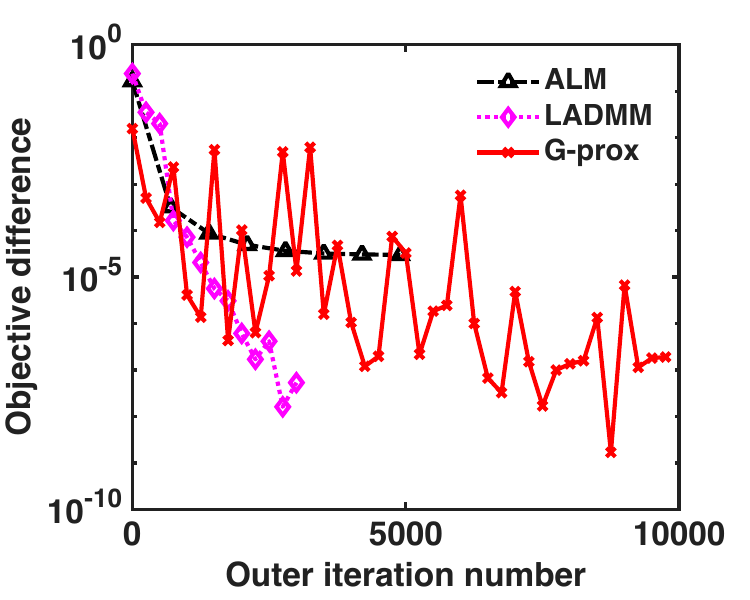}
    {\small $n_1 = 512$}
  \end{minipage} 
  \begin{minipage}{0.4\textwidth}
    \centering
    \includegraphics[width=\linewidth]{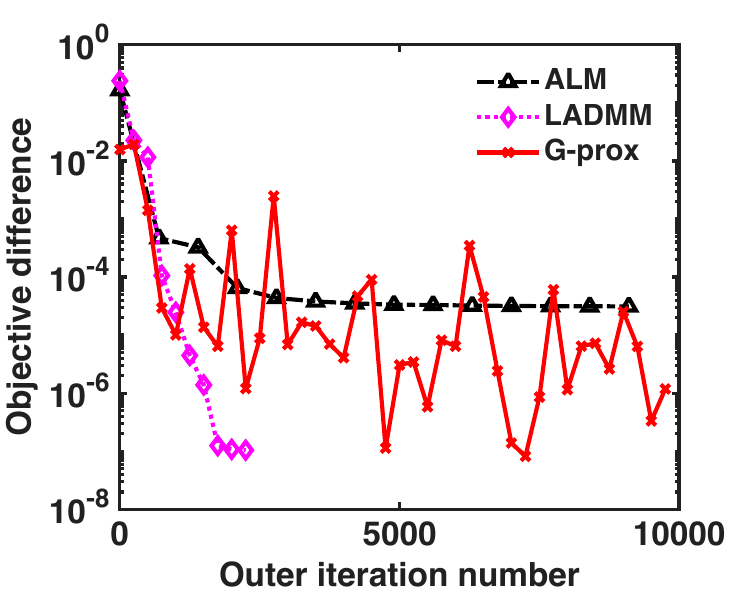}
    {\small $n_1 = 1024$}
  \end{minipage}


  \begin{minipage}{0.4\textwidth}
    \centering
    \includegraphics[width=\linewidth]{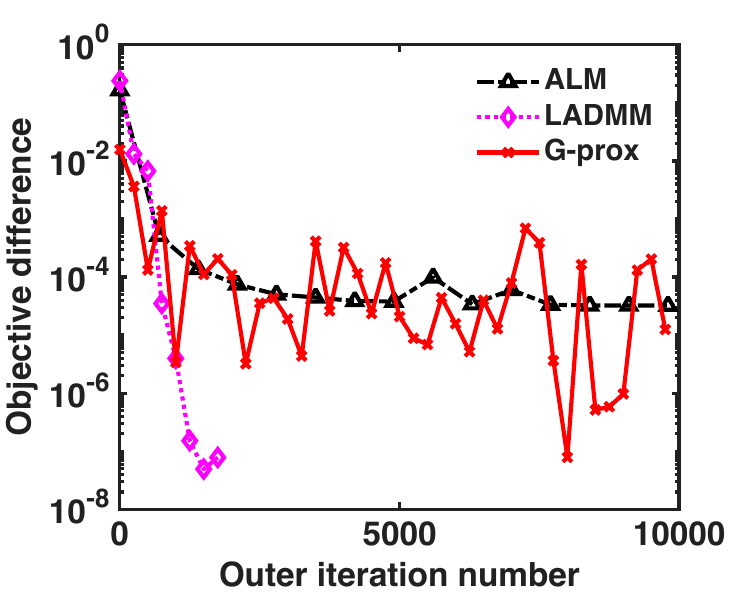}
    {\small $n_1 = 2048$}
  \end{minipage} 
  \begin{minipage}{0.4\textwidth}
    \centering
    \includegraphics[width=\linewidth]{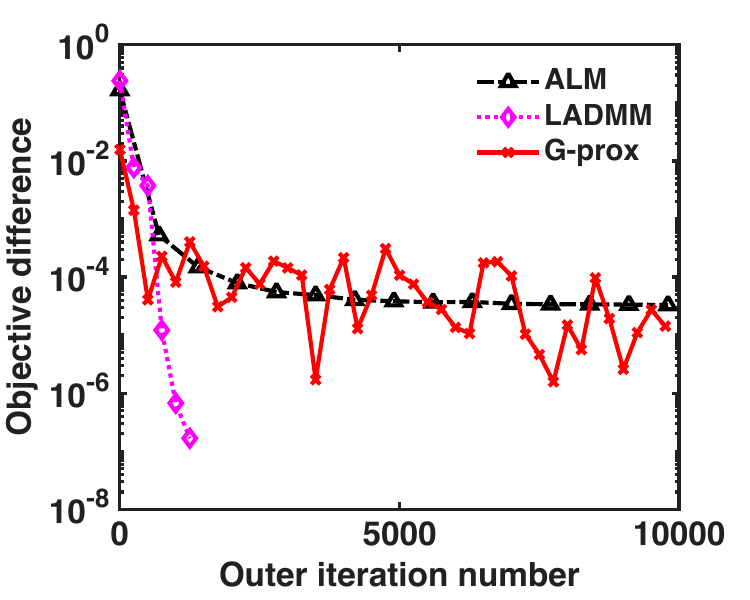}
    {\small $n_1 = 4096$}
  \end{minipage}

  \vspace{0.7em}


  \caption{Objective difference vs. iteration number for ALM~\cite{benamou2000computational}, G-prox~\cite{jacobs2019solving}, and LADMM
           with $\bm{\rho}_0, \bm{\rho}_1$ setting in \eqref{eq:rhosetting_notnormal_case3}, $n_1 \in \{512, 1024, 2048, 4096 \}$, and the number of time steps set to 64.}
  \label{fig:objective_case31D}
\end{figure}

\begin{figure}[htbp]
  \centering

  \begin{minipage}{0.4\textwidth}
    \centering
    \includegraphics[width=\linewidth]{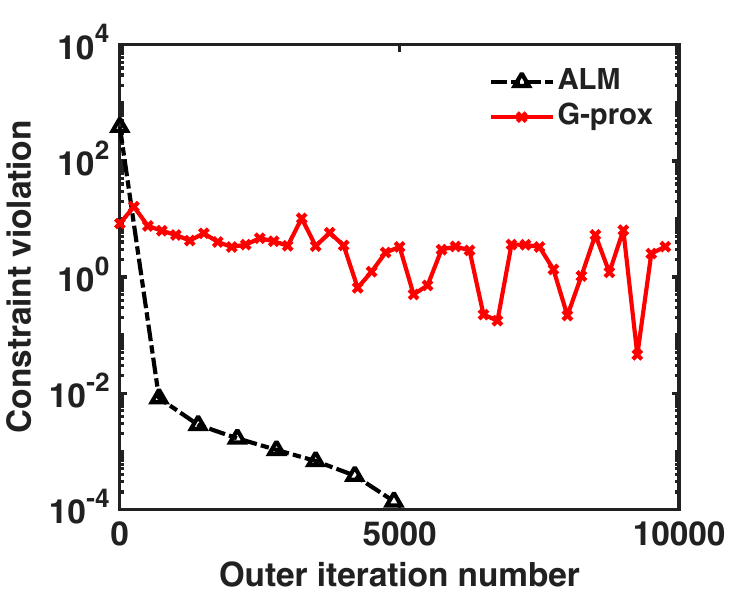}
    {\small $n_1 = 512$}
  \end{minipage}
  \begin{minipage}{0.4\textwidth}
    \centering
    \includegraphics[width=\linewidth]{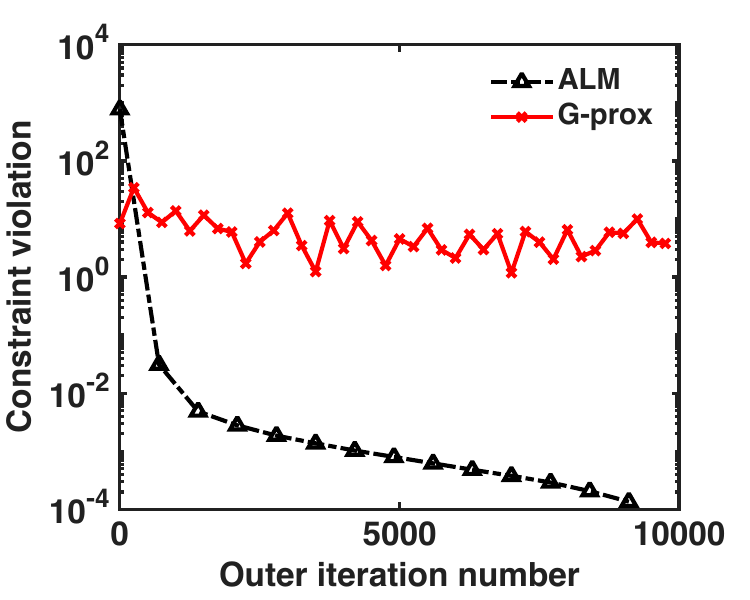}
    {\small $n_1 = 1024$}
  \end{minipage}


  \begin{minipage}{0.4\textwidth}
    \centering
    \includegraphics[width=\linewidth]{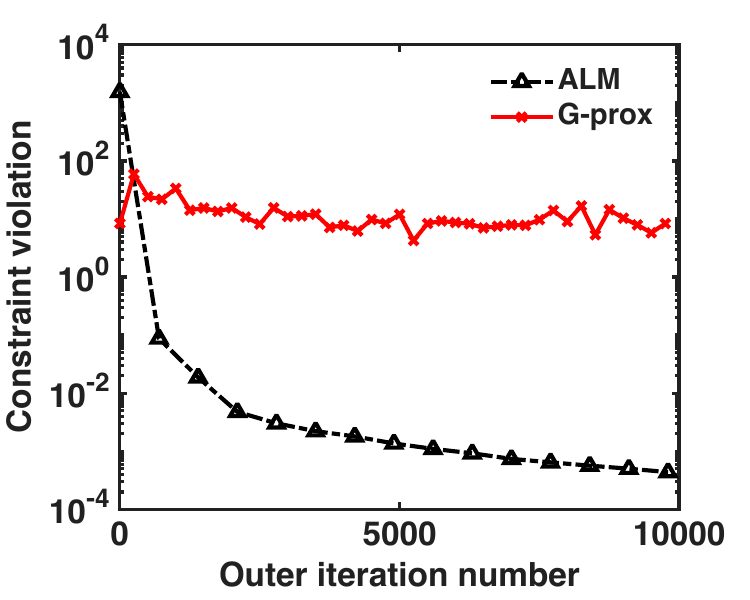}
    {\small $n_1 = 2048$}
  \end{minipage}
  \begin{minipage}{0.4\textwidth}
    \centering
    \includegraphics[width=\linewidth]{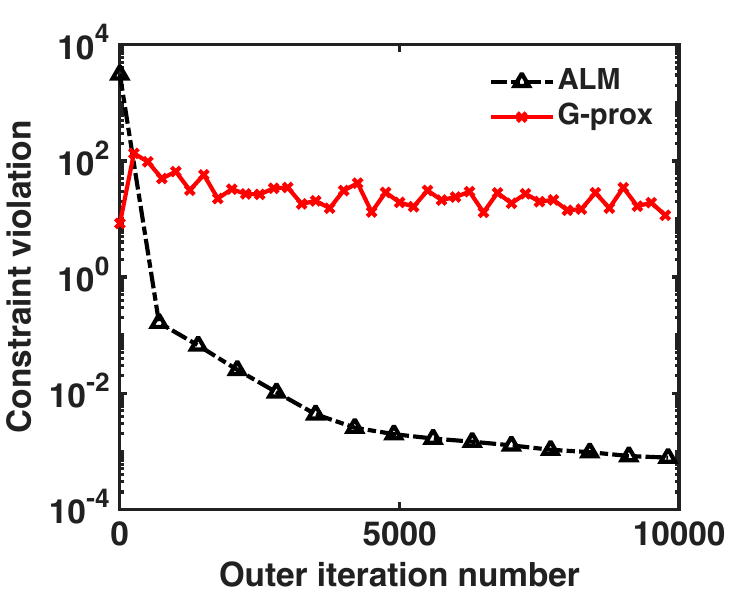}
    {\small $n_1 = 4096$}
  \end{minipage}

  \vspace{0.7em}


  \caption{Constraint violation vs. iteration number for ALM~\cite{benamou2000computational} and G-prox~\cite{jacobs2019solving}
           with $\bm{\rho}_0, \bm{\rho}_1$ setting in \eqref{eq:rhosetting_notnormal_case3}, $n_1 \in \{512, 1024, 2048, 4096 \}$, and the number of time steps set to 64.}
  \label{fig:cons_case31D}
\end{figure}
In Figure \ref{fig:objective_case31D}, {at} every iteration, we report the objective difference defined by
\begin{align}\label{eq:objdiff}
    \mathrm{obj}_{\mathrm{diff}}^k = \big|\, \mathrm{obj}_{\mathrm{num}}^* - \mathrm{obj}^k \,\big|,
\end{align}
where $\mathrm{obj}^k$ is the objective value at the $k$-th iteration and $\mathrm{obj}_{\mathrm{num}}^*$ is the lowest objective value reported in the $7$-th column of Table~\ref{table: objstar}. 
In Figure \ref{fig:cons_case31D}, we report the primal residual by ALM and G-prox. We omit the primal residual for LADMM because it is a projection based method {and its primal residual is always between the order of $10^{-10}$ to $10^{-13}$}. 

\subsubsection{Two dimensional spatial domain}\label{sec:num2d1agent}
In this set of numerical experiments, we consider a two-dimensional spatial domain, i.e., $D=2$. The initial and terminal densities defined in this subsection are therefore matrices.

{We first use 
synthetic data with} 
the initial and terminal densities 
given as follows: for all $i = 0, 1, \ldots, n_1$ and $j = 0, 1, \ldots, n_2$,
\begin{equation}\label{setting: 2drhoconsadd}
\begin{aligned}
    &({\rho}_0)_{i,j} = g_{[1/3, 2/3], [0.1, 0; 0, 0.1]}\left( \frac{i}{n_1}, \frac{j}{n_2}  \right), \\
    &({{\rho}}_1)_{i,j} = g_{[2/3, 1/3], [0.1, 0; 0, 0.1]}\left( \frac{i}{n_1}, \frac{j}{n_2}  \right), 
\end{aligned}
\end{equation}
where $n_1 = 256$ and $ n_2 = 256$.
The density setting in \eqref{setting: 2drhoconsadd} is similar to the one-dimensional setting in \eqref{eq:rhosetting_notnormal_case3}. Since no positive constant is added to ensure that the densities are bounded away from zero, this constitutes a challenging instance of problem~\eqref{prob:ADMM1ag}. The results by three compared methods are presented in Table~\ref{tab:2d1agent}. We observe that LADMM outperforms both ALM and G-prox across all reported metrics. 

\begin{table}[htbb]
\caption{Comparison of ALM~\cite{benamou2000computational}, G-prox~\cite{jacobs2019solving}, and LADMM in 2D space with $\bm{\rho}_0, \bm{\rho}_1$ defined in  \eqref{setting: 2drhoconsadd}, $ n_1 = 256, n_2 = 256$, and the number of time steps set to 64. The best performance is highlighted in bold.}
\label{tab:2d1agent}
\begin{center}
{\small
\begin{tabular}{|c|c|c|c|c|} 
\hline
           & \texttt{Iter} & Time (s) & $\texttt{Obj}_{\mathrm{proj}}$ & \texttt{Pres}\\
          \hline
          ALM & 5564 & 4413 & 0.12136 & 9.23E-04
         \\
         LADMM & \textbf{1787} & \textbf{3298} & \textbf{0.10911} & \textbf{5.19E-13}
         \\
         G-prox & 1E4 & {6686} & 14.6445 & 48.4
         \\
         \hline
\end{tabular}
}
\end{center}

\end{table}

Second, to demonstrate the performance of our method on more complex distributions, we apply it to image data, where the image intensity is interpreted as a probability density.
In this experiment, we want to map initial and terminal densities represented by grayscale images, 
given by the ones at $t_0=0$ and $t_5=1$ in Figure~\ref{fig:centaur_man}. More specifically, let $\mathrm{Im}_0$ and $\mathrm{Im}_1 \in \mathbb{R}^{n_1 \times n_2}$ denote the pixel values of the leftmost and rightmost images in Figure~\ref{fig:centaur_man}. The initial and terminal densities are defined by
\begin{align}\label{setting:images}
(\rho_0)_{i,j} = 10^4 \frac{(\mathrm{Im}_0)_{i,j}}{\|\mathrm{Im}_0\|_1} + 0.1, \quad
(\rho_1)_{i,j} = 10^4 \frac{(\mathrm{Im}_1)_{i,j}}{\|\mathrm{Im}_1\|_1} + 0.1, \forall (i,j),
\end{align}
where $\|\cdot\|_1$ denotes the sum of the absolute values of all entries. {For all compared methods, we set the stopping tolerance to $10^{-6}$, the maximum number of iterations to $10^4$, and $n_0=20, n_1 = n_2 = 256$. The results are presented in Table~\ref{tab:2d1agent_images}.}
The snapshots of evolution at intermediate times {by LADMM} are  shown in Figure~\ref{fig:centaur_man}. {From Table~\ref{tab:2d1agent_images}, we can see again that the performance of LADMM is better than both G-prox and ALM with respect to all the reported metrics.}

\begin{figure}[htbp]
\centering
\resizebox{\linewidth}{!}{
\begin{tabular*}{\linewidth}{@{\extracolsep{\fill}}cccccc}
$t_0=0$ & $t_1=0.2$ & $t_2=0.4$ & $t_3=0.6$ & $t_4=0.8$ & $t_5=1$ \\[4pt]
\includegraphics[width=.13\linewidth]{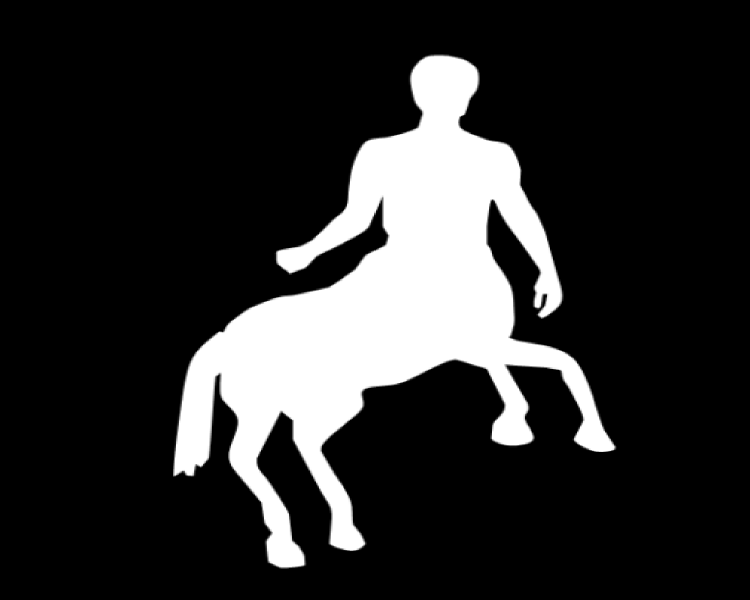} &
\includegraphics[width=.13\linewidth]{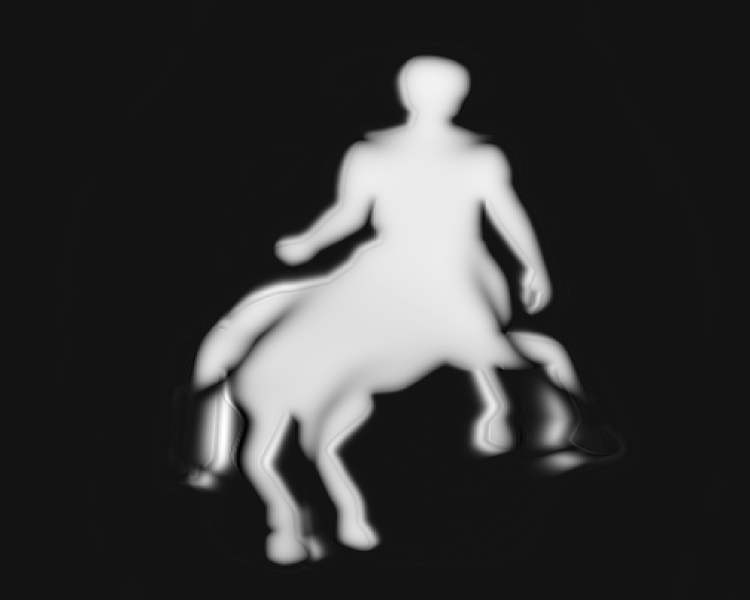} &
\includegraphics[width=.13\linewidth]{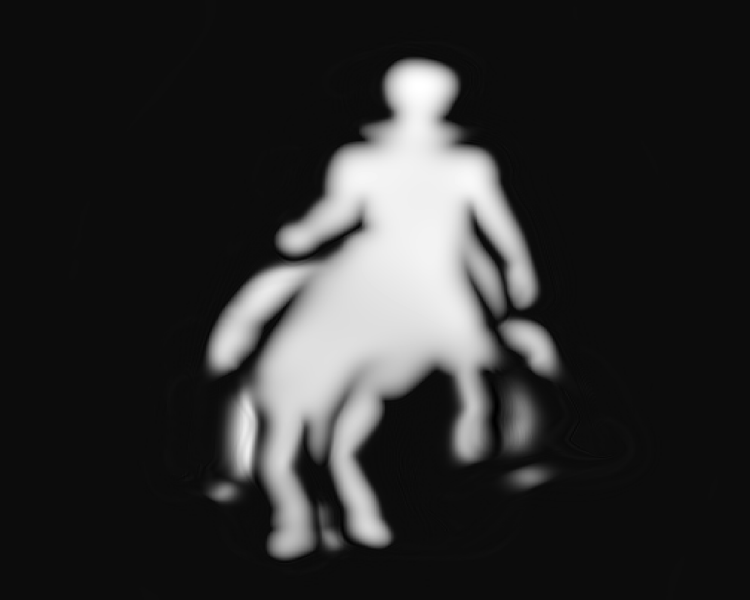} &
\includegraphics[width=.13\linewidth]{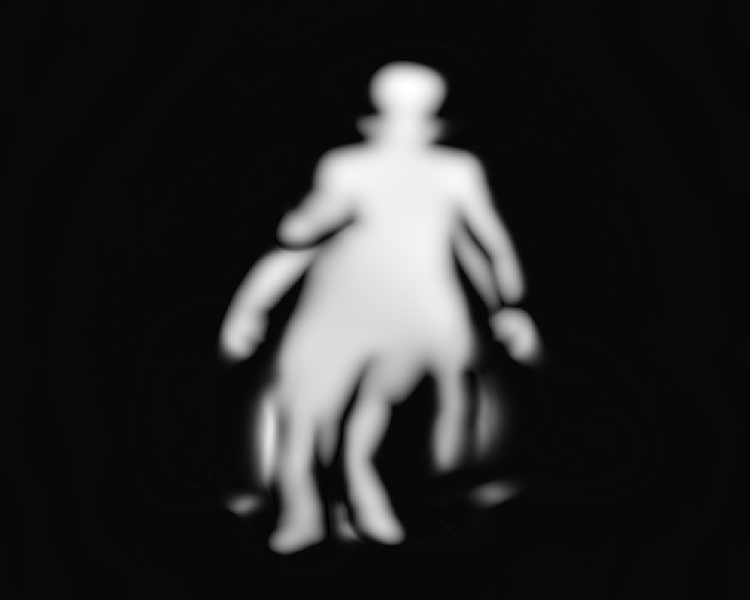} &
\includegraphics[width=.13\linewidth]{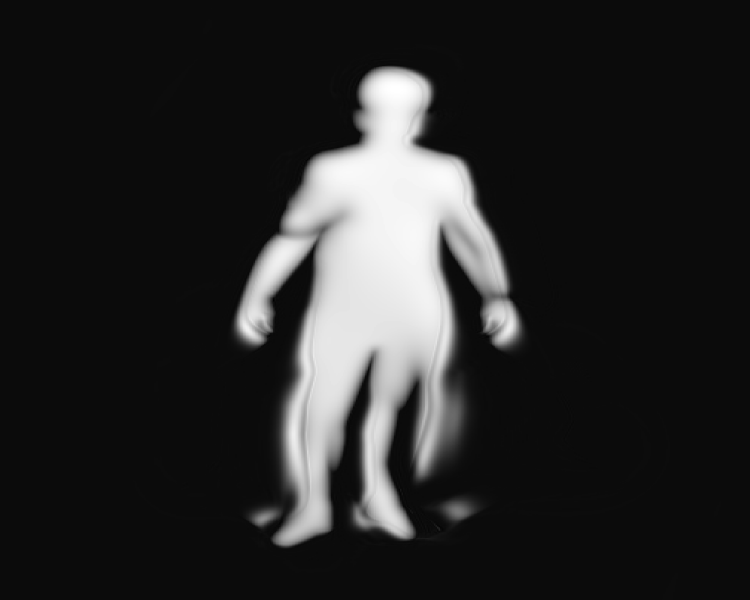} &
\includegraphics[width=.13\linewidth]{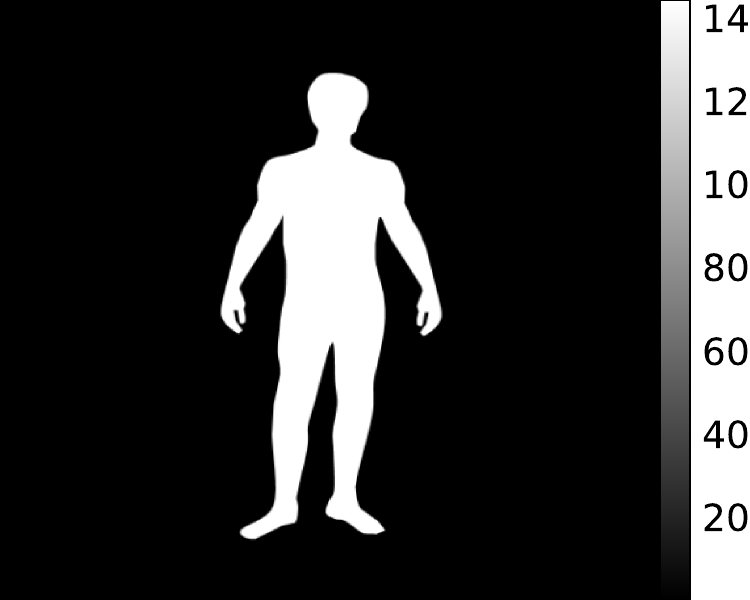}
\end{tabular*}
}
\caption{Evolution of images by LADMM with $n_0=20, n_1=n_2=256$, $ \beta_0 = 10^{-4}$ at times $t=0, 0.2, 0.6, 0.8, 1$ from left to right. 
}
\label{fig:centaur_man}
\end{figure}

\begin{table}[htbb]
\caption{Comparison of ALM~\cite{benamou2000computational}, G-prox~\cite{jacobs2019solving}, and LADMM in 2D space with $\bm{\rho}_0, \bm{\rho}_1$ defined in  \eqref{setting:images}, $ n_1 = 256, n_2 = 256$, and the number of time steps set to 20. The best performance is highlighted in bold.}
\label{tab:2d1agent_images}
\begin{center}
{\small
\begin{tabular}{|c|c|c|c|c|} 
\hline
           & \texttt{Iter} & Time (s) & $\texttt{Obj}_{\mathrm{proj}}$ & \texttt{Pres}\\
          \hline
          ALM & 1E4 & 16850 & 3.53E-2 & 1.11E-03
         \\
         LADMM & \textbf{6536} & \textbf{11786} & \textbf{3.32E-2} & \textbf{6.11E-11}
         \\
         G-prox & 1E4 & {16751} & 1.10143 & 3.52
         \\
         \hline
\end{tabular}
}
\end{center}
\end{table}

\subsection{Numerical results in a distributed setting}\label{subsec: multiagentnumerical}
In this subsection, we solve \eqref{prob:ADMM1ag} in a distributed setting. Similar to Section \ref{sec:numexp1ag}, {we conduct} experiments in {the setting of both} one- and two-dimensional spatial domain.



\subsubsection{1D spatial domain}

In this setting, we set the initial and terminal densities by \eqref{eq:rhosetting_notnormal_case3}. 
%
%
The experiment is performed for different numbers of agents ranging from 1 to 12. 
We present the results with two different gridsize settings: $n_0 = 1024, n_1 = 32768$ and $n_0 = 512, n_1 = 65536$.
The results are presented in Figure~\ref{fig:combined_runs}. 
LADMM converges {to the same level of accuracy} within 1060 iterations for both gridsize settings. 
From Figure~\ref{fig:combined_runs}, {we observe that LADMM achieves close to $7\times$ speedup when scaling from $1$ to $12$ agents while reducing the storage burden by distributing the variables across multiple agents.}

\begin{figure}[htbp]
\centering
    \includegraphics[width=0.7\linewidth]{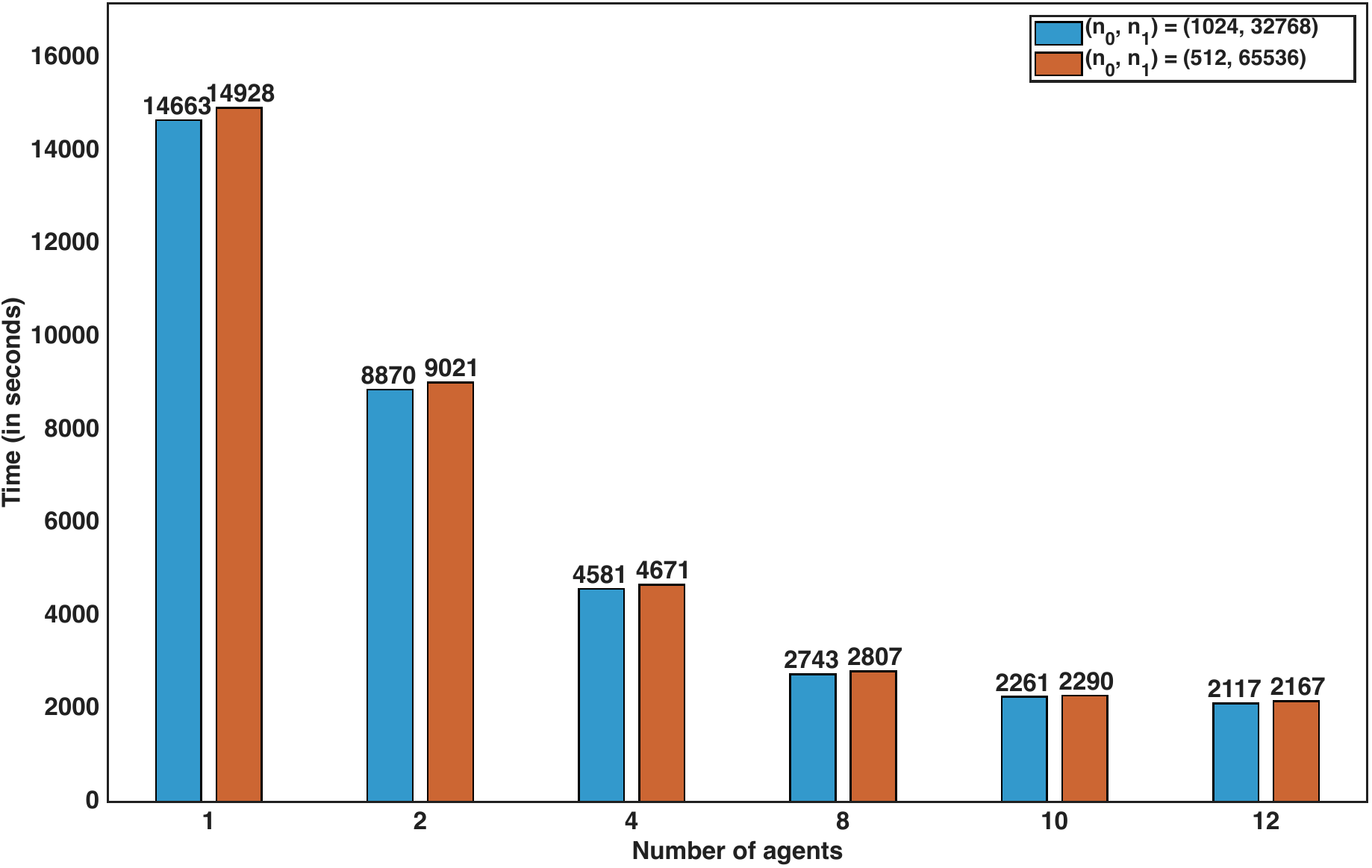}
\caption{Time taken vs. the number of agents by LADMM with dentity setting~\eqref{eq:rhosetting_notnormal_case3}, $N \in \{1, 2, 4, 8, 10, 12 \}$, $\beta_0=10^{-4}$, and $(n_0, n_1) = (1024, 32768)$ and $(n_0, n_1) = (512, 65536)$ represented by blue and red respectively.}
\label{fig:combined_runs}
\end{figure}

\subsubsection{2D spatial domain}\label{num:mapimages}
In this experiment, we examine the evolution of densities when the initial and terminal densities are represented by grayscale images.
We consider the mapping between images of a chalice and a glass, i.e., the leftmost and rightmost images in Figure~\ref{fig:chaliceGlass}.\footnote{The images used for $\bm{\rho}_0$ and $\bm{\rho}_1$ in Figure~\ref{fig:chaliceGlass} are modified from \url{https://www.metmuseum.org/art/collection}.} {The density setting follows the setting in~\eqref{setting:images}.} The snapshots of intermediate evolution at different times are presented in Figure~\ref{fig:chaliceGlass}.

\begin{figure}[htbp]
\centering
\begin{tabular*}{\linewidth}{@{\extracolsep{\fill}}cccccc}
$t_0=0$ & $t_1=0.2$ & $t_2=0.4$ & $t_3=0.6$ & $t_4=0.8$ & $t_5=1$ \\[4pt]

\includegraphics[width=0.13\linewidth]{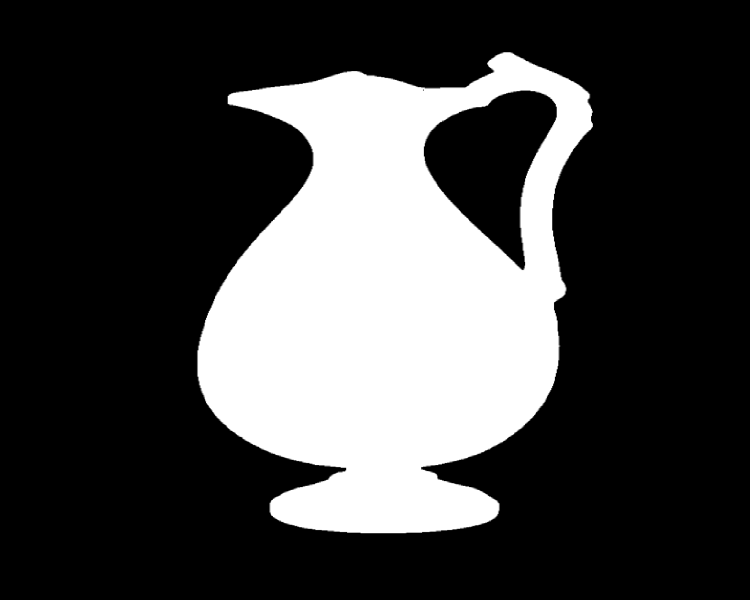} &
\includegraphics[width=0.13\linewidth]{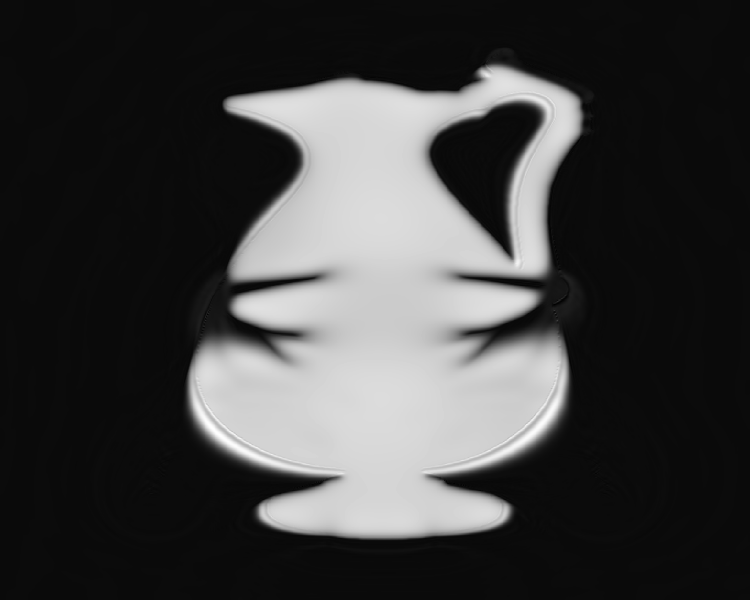} &
\includegraphics[width=0.13\linewidth]{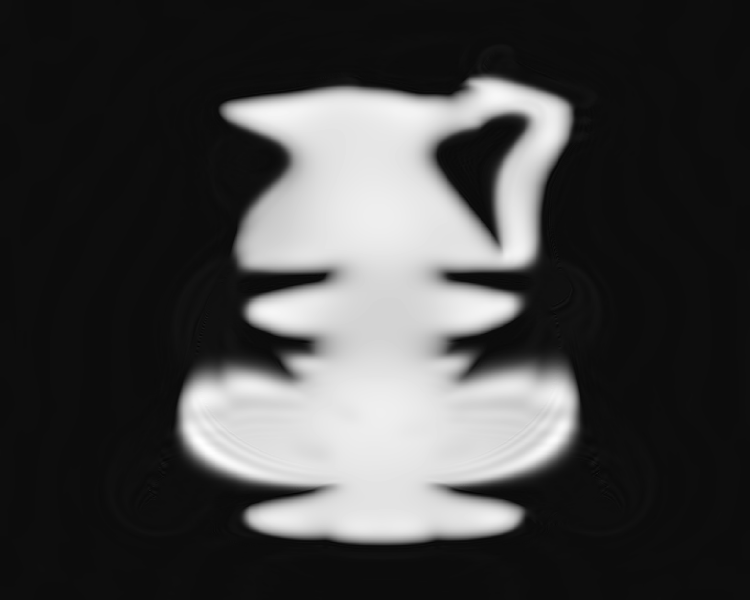} &
\includegraphics[width=0.13\linewidth]{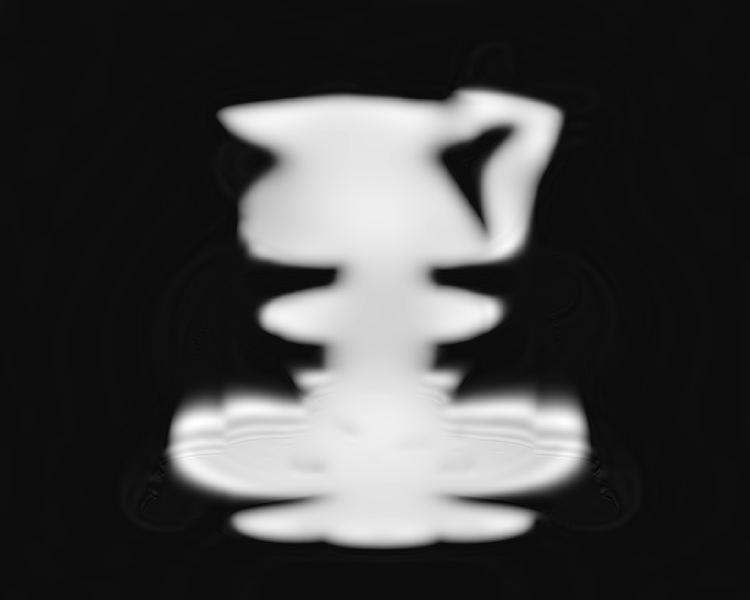} &
\includegraphics[width=0.13\linewidth]{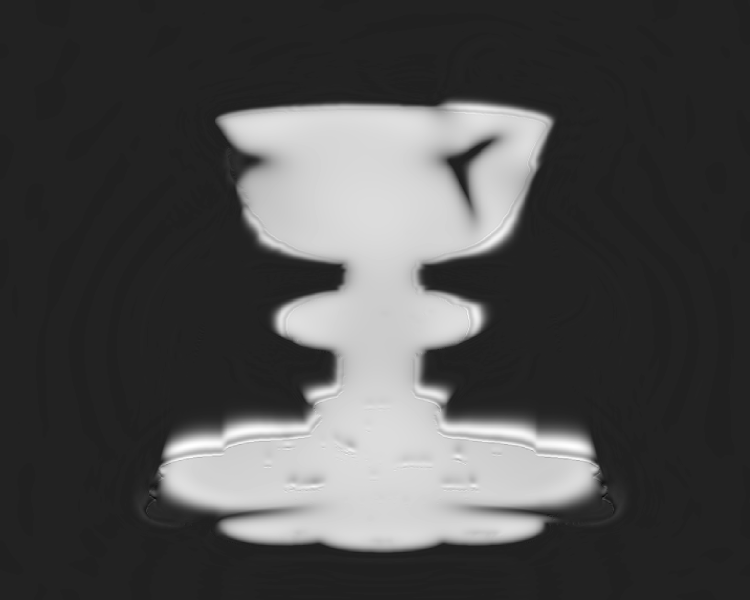} &
\includegraphics[width=0.13\linewidth]{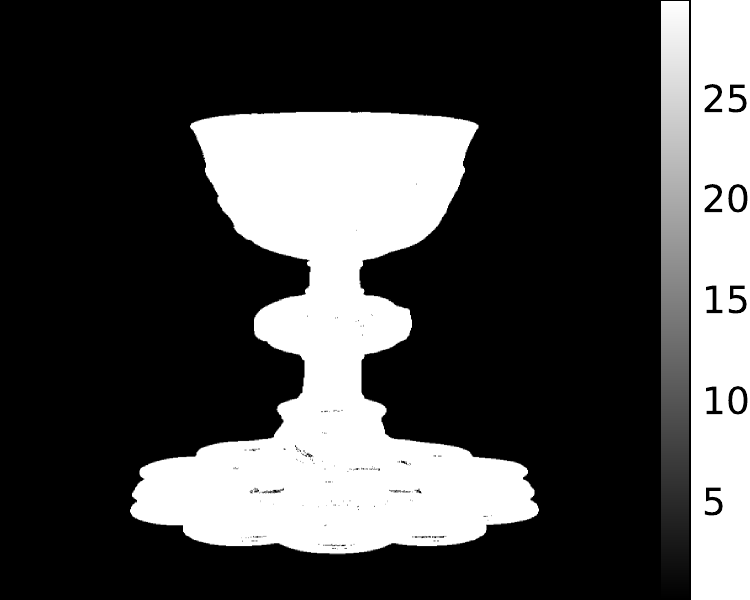}
\end{tabular*}
\caption{Evolution of images by LADMM with $n_0 = 20, n_1 = n_2 = 784, \beta_0 = 10^{-4}$ at times $t=0, 0.2, 0.4, 0.6, 0.8, 1$ from left to right.}
\label{fig:chaliceGlass}
\end{figure}

Furthermore, this experiment is run with varying number of agents from 1 to 12 (the result presented in Figure~\ref{fig:chaliceGlass} is run with $N= 12$). The corresponding running times for different numbers of agents are presented in Figure~\ref{fig:nx864ny864}, from which 
{we observe that LADMM achieves about $6\times$ speedup when scaling from $1$ to $12$ agents.}

\begin{figure}[htbp]
\centering
    \includegraphics[width=0.6\linewidth]{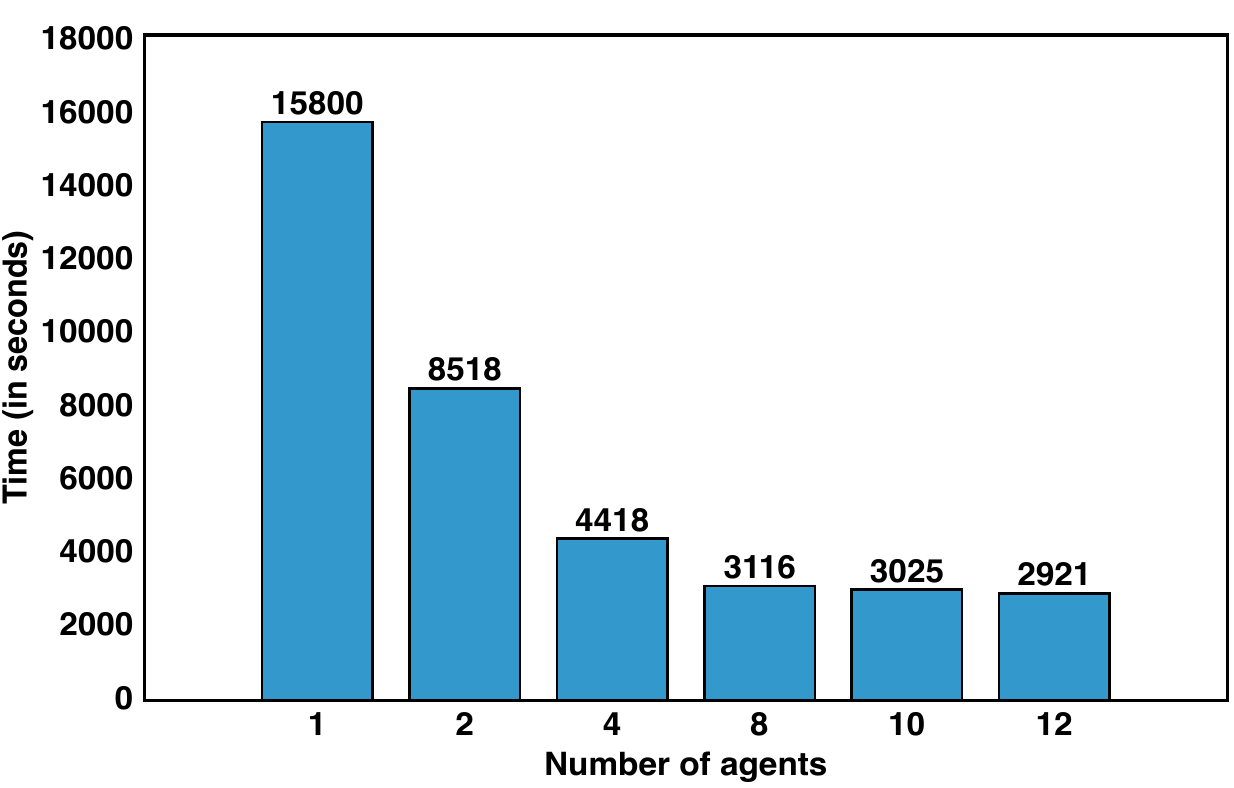}
\caption{Time taken vs. the number of agents for LADMM during image mapping in Figure~\ref{fig:chaliceGlass} with $n_0 = 20, n_1 = 784, n_2 = 784$, $\beta_0=10^{-4}$, and $N \in \{1, 2, 4, 8, 10, 12 \}$. Total iteration taken is $1090$.}
\label{fig:nx864ny864}
\end{figure}

\section{Conclusion}\label{sec:conclusion}
We have proposed an LADMM algorithm 
with an exact proximal mapping for solving the dynamic optimal transport problem~\eqref{problem: main} in both nondistributed and distributed settings. The proposed approach improves upon existing methods in two key aspects. First, it provides a robust and efficient solver for problems in which the initial and/or terminal densities approach zero, a regime where existing methods often suffer from slow convergence or instability. Second, it improves storage and computational efficiency through a distributed formulation that partitions the space--time variables across multiple agents.

We have conducted extensive numerical experiments in one- and two-dimensional spatial settings, considering both nondistributed and distributed implementations and varying levels of problem difficulty. The results demonstrate that the proposed method consistently outperforms existing approaches in terms of accuracy, 
computational time, and iteration count. In several challenging cases, our method successfully solves problem instances for which competing methods fail to converge. In the distributed setting, the proposed method achieves a \(6\times\) speedup when scaling from \(1\) to \(12\) agents, illustrating its potential for large-scale dynamic OT computations.

\bibliographystyle{siamplain} 
\bibliography{optim}

\appendix 

\section{proof of Lemma~\ref{lemma:proxmain}}

\begin{proof}
We first show that the lower-level set is closed for any $\alpha \geq 0$ to guarantee lower semicontinuity (l.s.c) 
of $F$. Define the lower level set of $F$ as
\[
L_{\alpha} = \{(\rho, \vm) \mid F(\rho, \vm) \leq \alpha\}.
\]
 We treat $\alpha = 0$ and $\alpha > 0$ cases separately and have 
\[
L_{\alpha} =
\begin{cases}
\big\{(\rho, \mathbf{0}) \mid \rho \geq 0\big\}, & \text{ if }\alpha = 0, \\[6pt]
\{(0,\vzero)\} \cup \big\{ (\rho, \vm) \mid
\frac{\|\vm\|^2}{2\rho} \leq \alpha,\ \rho > 0, 
\big\}, & \text{ if } \alpha > 0.
\end{cases}
\]

It is easy to see that for both cases of $\alpha$, the lower level set is closed and hence the proximal mapping of $F$ is well defined as 
\begin{align}\label{eq: proxdef}
    \mathrm{Prox}_{F, \gamma}(\widehat{\rho}, \widehat{\vm}) = \argmin_{\rho, \vm} F(\rho, \vm) + \frac{1}{2\gamma} \left[ (\rho - \widehat{\rho})^2 + \|\vm - \widehat{\vm}\|^2 \right].
\end{align}

We let \((\overline{\rho}, \overline{\vm})\) be the solution of \eqref{eq: proxdef}. Then, using the optimality condition, we get
\begin{align}\label{eq: optcondproxmap}
\mathbf{0} \in \partial F(\overline{\rho}, \overline{\vm}) + \frac{1}{\gamma} \begin{bmatrix} \overline{\rho} - \widehat{\rho} \\ \overline{\vm} - \widehat{\vm} \end{bmatrix}.
\end{align}
From the definition of $F$ in \eqref{eq: maincostfunction}, we see that when \( \overline{\rho} = 0 \), then \( \overline{\vm} = 0 \). Suppose \( \boldsymbol{\xi} \in \partial F(0, \mathbf{0}) \), where $\boldsymbol{\xi}$ is a subgradient of $F$ at $(0,\mathbf{0})$. Since $F$ is convex and lower semicontinuous, its subdifferential at $(0,\mathbf{0})$ is characterized as follows. 
\begin{align}
&    F(\rho, \vm) \geq F(0, \mathbf{0}) + \left\langle \boldsymbol{\xi}, \begin{bmatrix}
    \rho \\
    \vm
\end{bmatrix} \right\rangle, \quad \forall (\rho, \vm) \in \text{dom}(F), \notag \\
&\Leftrightarrow \frac{\|\vm\|^2}{2\rho} \geq \rho \xi_1 + \boldsymbol{\xi_2}^\top \vm, \quad \forall \rho > 0, \, \forall \vm \in \mathbb{R}^D, \notag \\
&\Leftrightarrow \|\vm\|^2 \geq 2\rho^2 \xi_1 + 2 \rho \boldsymbol{\xi_2}^\top \vm, \quad \forall \rho > 0, \, \forall \vm \in \mathbb{R}^D \label{eq:proxcond1},
\end{align}
where $\xi_1$ is the first component of $\boldsymbol{\xi}$ and $\boldsymbol{\xi_2}$ is the rest. We present the following two claims.

Claim 1: \( \xi_1 \leq 0 \). This can be proved by contradiction.
 If \( \xi_1 > 0 \), the condition \eqref{eq:proxcond1} is violated at \( \vm = 0 \), \( \rho > 0 \).

Claim 2: \( {\|\boldsymbol{\xi_2}\|^2} \leq -2\xi_1 \). This can be shown by looking at \eqref{eq:proxcond1}. We have,  
\begin{align}\label{eq:mcube}
     \|\vm\|^2 - 2\rho \boldsymbol{\xi_2}^\top \vm - 2\rho^2 \xi_1 \geq 0, \quad \forall \vm \in \mathbb{R}^D,
\end{align}
which indicates
\begin{align*}
    4\rho^2 \|\boldsymbol{\xi_2}\|^2 + 8\rho^2 \xi_1 \leq 0 \iff \|\boldsymbol{\xi_2}\|^2 \leq -2\xi_1.
\end{align*}
Combining Claims 1 and 2, we get
\begin{align}\label{eq: subdiff00}
\partial F(0,\vzero) = \{(\xi_1, \boldsymbol{\xi_2}) \mid \xi_1 \leq 0, \|\boldsymbol{\xi_2}\|^2 \leq -2\xi_1\}.
\end{align}
Hence from \eqref{eq: optcondproxmap} and 
\eqref{eq: subdiff00}, we have that $(0,\mathbf{0})$ is an optimal point if and only if $\|\widehat{\vm}\|^2 \le -2 \widehat{\rho}$.

When \( \rho > 0 \), \( F \) is differentiable, and
\begin{align}\label{eq: gradproxfunct}
    \nabla F(\rho, \vm) = 
\begin{bmatrix}
-\frac{\|\vm\|^2}{2\rho^2} \\
\frac{\vm}{\rho}
\end{bmatrix}.
\end{align}
Together from \eqref{eq: optcondproxmap} and \eqref{eq: gradproxfunct}, we get 
\begin{align*}
    \mathbf{0} = \begin{bmatrix}
-\frac{\|\overline{\vm}\|^2}{2\overline{\rho}^2} \\[2mm]
\frac{\overline{\vm}}{\overline{\rho}}
\end{bmatrix} + \frac{1}{\gamma} \begin{bmatrix} \overline{\rho} - \widehat{\rho} \\[2mm] \overline{\vm} - \widehat{\vm} \end{bmatrix}.
\end{align*}
Solving the above system for $\overline{\rho}$ and $\overline{\vm}$, we get 
\begin{align}\label{eq: cubicrho}
    &\overline{\rho}^3 + (2\gamma - \widehat{\rho})\overline{\rho}^2 + (\gamma^2 - 2 \gamma \widehat{\rho})\overline{\rho} - \frac{1}{2}\gamma \|\widehat{\vm}\|^2 - \gamma^2\widehat{\rho} = 0, \\ \label{eq: solveform}
    &\overline{\vm} = \frac{\widehat{\vm} \overline{\rho}}{\overline{\rho} + \gamma}.
\end{align}
Thus, we can solve the cubic equation in \eqref{eq: cubicrho} to get $\overline{\rho}$ and plug its value in \eqref{eq: solveform} to get $\overline{\vm}$. This completes the proof.
    \end{proof}

\end{document}